\author{Farzad Aryan}
\newtheorem{lemma}{Lemma}[section]
\newtheorem{theorem}{Theorem}[section]
\newtheorem*{conjecture}{Conjecture}
\newtheorem*{conj}{Conjecture}
\theoremstyle{remark}
\newtheorem{remark}{Remark}[section]
\numberwithin{equation}{section}
\numberwithin{table}{section}
\numberwithin{figure}{section}
\begin{document}


\pagenumbering{arabic}

\date{\vspace{-2ex}}

\title{{\textbf {The distribution of k-tuples of reduced residues}}}
\author{Farzad Aryan}
\maketitle
\begin{abstract}

In 1940 Paul Erd\H{o}s made a conjecture about the distribution of reduced residues. Here we study the distribution of $k$-tuples of reduced residues.
\end{abstract}
\noindent
\\
\\
\\

\section*{Introduction}
In 1936 Cramer \cite{C}, assuming the Riemann hypothesis (RH), showed that
\begin{equation}
\label{eq-1}
\sum_{p_n <x}(p_{n+1}-p_n)^2 \ll x(\log x)^{3+\epsilon}
\end{equation} from which he deduced $p_{n+1}-p_n =O(\sqrt{p_n}\log p_n).$ Based on his probabilistic model for the primes he also conjectured that $$\limsup_{n \rightarrow \infty} \frac{p_{n+1}-p_n}{(\log p_n)^2}= 1.$$ Taking into account various sieve estimates in Cramer's probabilistic model, Granville \cite{A} in 1995 conjectured that $$\limsup_{n \rightarrow \infty} \frac{p_{n+1}-p_n}{(\log p_n)^2} \geq 2e^{-\gamma },$$
which is bigger than $1$. Note that $\gamma$ is the Euler constant. Proving \eqref{eq-1} unconditionally seems quite deep, which led P. Erd\H{o}s to make an analogous conjecture: \\
\\
\begin{conj}[Erd\H{o}s \cite{er}]
\noindent Let $q$ be a natural number, and let $P = \phi(q)/q$ be the probability that a randomly chosen
integer is relatively prime to $q$. Let $$1 = a_1 < a_2 <\cdots$$ be the integers co-prime to $q$ in increasing
order, and let $$V_\lambda (q) = \sum_{i=1}^{\phi(q)} (a_{i+1}- a_i )^ \lambda .$$ Then $$V_2 (q)\ll \phi(q)P^{-2}=qP^{-1}.$$\\
\end{conj}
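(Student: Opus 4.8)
The plan is to convert the moment $V_\lambda(q)$ into a question about long gaps between reduced residues, and then to control those gaps with the distribution of $k$-tuples of reduced residues. For $h\ge 0$ put
$$
U_h(q)=\#\bigl\{\,n \bmod q:\ (n,q)=1\ \text{and}\ (n+j,q)>1\ \text{for all}\ 1\le j\le h\,\bigr\},
$$
the number of gaps of length $>h$ among the reduced residues in one period. Writing $d_i=a_{i+1}-a_i$ and using the telescoping identity $d^{\lambda}=\sum_{h\ge 0}\bigl((h+1)^{\lambda}-h^{\lambda}\bigr)\mathbf 1[d>h]$, one gets
$$
V_\lambda(q)=\sum_{h\ge 0}\bigl((h+1)^{\lambda}-h^{\lambda}\bigr)\,U_h(q)\ \ll_\lambda\ \sum_{h\ge 0}(h+1)^{\lambda-1}\,U_h(q),
$$
because $0\le (h+1)^{\lambda}-h^{\lambda}\ll_\lambda (h+1)^{\lambda-1}$. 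So it is enough to prove $\sum_{h\ge 0}(h+1)^{\lambda-1}U_h(q)\ll_\lambda qP^{1-\lambda}$.

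I would split this sum at $h\asymp 1/P$, the size of the mean gap $q/\phi(q)=1/P$. For $h\le 1/P$ the only ingredient is the trivial bound $U_h(q)\le U_0(q)=\phi(q)=qP$; since $\sum_{h\le 1/P}(h+1)^{\lambda-1}\ll_\lambda P^{-\lambda}$ for every $\lambda>0$ (using $P\le 1$), this range already produces the full $\ll_\lambda qP\cdot P^{-\lambda}=qP^{1-\lambda}$. The entire difficulty is the tail $h>1/P$, where genuine decay of $U_h(q)$ is needed: the trivial estimates $U_h(q)\le qP$ and $U_h(q)\le q/h$ (from $\sum_i d_i=q$) are useless there, because the Jacobsthal function shows that gaps far longer than $1/P$ do occur, e.g.\ when $q$ is a primorial.

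To extract decay I would expand $\prod_{j=1}^{h}\bigl(1-\mathbf 1[(n+j,q)=1]\bigr)$ by inclusion--exclusion, which gives
$$
\frac{U_h(q)}{q}=\sum_{S\subseteq\{1,\dots,h\}}(-1)^{|S|}\,\rho\bigl(\{0\}\cup S\bigr),\qquad
\rho(T):=\prod_{p\mid q}\Bigl(1-\frac{\nu_p(T)}{p}\Bigr),
$$
where $\nu_p(T)$ is the number of residue classes modulo $p$ occupied by $T$; each $\rho(\{0\}\cup S)$ is exactly a $k$-tuple density of the type estimated in the earlier sections. Every prime $p>h$ contributes the ``independent'' factor $1-(|S|+1)/p$, so the correlations come solely from the primes $p\le h$ dividing $q$. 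The strategy is to factor $q=q_1q_2$ with $q_1=\prod_{p\le h,\ p\mid q}p$, treat the $q_1$-aspect by the Chinese Remainder Theorem together with the (polynomially bounded) Jacobsthal function of $q_1$, and invoke the $k$-tuple distribution estimates of the paper to show that the $q_2$-aspect follows the independent model, gaining a factor $P$ for each fresh constraint. Equivalently one can run the classical variance method: expanding a high even power of $\Phi(n,n+h)=\#\{n<m\le n+h:(m,q)=1\}$ into $\le 2m$-tuple densities, the $k$-tuple bounds control $\sum_{n\bmod q}\bigl(\Phi(n,n+h)-hP\bigr)^{2m}$, and since a gap $>h$ forces a deficiency $\ge hP$ in such an interval, Chebyshev turns a high enough moment into the desired bound on $U_h(q)$. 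Summing the resulting estimate against $(h+1)^{\lambda-1}$ over $h>1/P$ then recovers $qP^{1-\lambda}$.

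The hard part is the behaviour of $U_h(q)$ --- equivalently the lower tail of $\Phi(n,n+h)$ --- for $h$ only slightly above $1/P$, uniformly over $q$. The obstruction is precisely the primes $p\le h$ dividing $q$: they make the events $\{(n+j,q)=1\}$ positively correlated, so the alternating $k$-tuple sum does not collapse to the independent value $\approx P(1-P)^h$, and a crude moment bound is too lossy in the overlap window $1/P\le h\le P^{-1-\varepsilon}$. Overcoming this needs the precise $k$-tuple distribution estimates developed earlier, applied with a moment order that grows with $h/(1/P)$; this is also the step where, for $\lambda\ge 2$ and for pinning down the implied constant, the deepest analytic input is required. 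For $0<\lambda<1$ the tail is light enough that a weaker input would suffice, but for $\lambda=1$ and $\lambda=2$ the full strength of the $k$-tuple estimates is used.
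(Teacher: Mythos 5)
Your second route (the variance/high-moment method) is, in outline, exactly the route taken by the paper for Theorem~\ref{Theorem} in the case $s=1$, which is the Montgomery--Vaughan argument: telescope $V_\lambda$ into tail counts, split at $h\asymp P^{-1}$, handle the small-$h$ range trivially, and in the tail bound the count of long gaps via a fixed even moment of the interval discrepancy $\sum_{m=1}^{h}k_q(n+m)-hP$, itself controlled by writing $q=q_1q_2$ (primes $\le y$ vs.\ $> y$), using an exponential-sum estimate for the $q_1$-aspect and a probabilistic binomial-moment estimate for the $q_2$-aspect. So the strategy matches.

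There is, however, one genuine gap in the tail step as you wrote it. You bound $U_h(q)$ by observing that each gap $>h$ produces at least \emph{one} residue $n$ whose window $(n,n+h]$ has deficiency $\ge hP$, and then apply Chebyshev. That gives $U_h(q)(hP)^k\le M_k(q,h)$, hence with $M_k\ll q(hP)^{k/2}$ you get $U_h\ll q(hP)^{-k/2}$ and
$\sum_{h>1/P}h^{\lambda-1}U_h(q)\ll qP^{-\lambda}$, which is short by a factor of $P^{-1}$ of the target $qP^{1-\lambda}$. The paper (and Montgomery--Vaughan) instead observe that a gap $a_{i+1}-a_i>x$ produces $a_{i+1}-a_i-h$ starting points $n$ with $a_i\le n<a_{i+1}-h$, each with deficiency $hP$ for $h=[x/2]$; so the moment dominates $h(hP)^kL(x)$, not just $(hP)^kL(x)$. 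That extra factor of $h$ is exactly what converts $qP^{-\lambda}$ into $qP^{1-\lambda}$, and you need it.

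Two smaller points. First, the moment order does not need to grow with $h/(1/P)$: a \emph{fixed} even $k$ with $k/2+1>\lambda$ suffices, and the reason the medium range $P^{-1}<h<e^{P^{-\alpha}}$ works with fixed $k$ is that there the combined estimate already delivers $M_k\ll q(hP)^{k/2}$, while for very large $h$ the cruder pure exponential-sum bound $M_k\ll qh^{k/2}P^{-2^k+k}$ wins because of the strong decay in $h$. Second, the inclusion--exclusion/Jacobsthal alternative you sketch is not needed and would be the harder way in: the paper never invokes the Jacobsthal function; the $q_1$-aspect is handled by the Montgomery--Vaughan exponential-sum lemma (Lemma~\ref{th0} here), not by a worst-case gap bound.
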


\noindent More generally, the conjecture is
\begin{equation}
\label{eq-.5}
V_{\lambda }(q)  \ll qP^{1-\lambda } \text{  for $\lambda >0$.}
\end{equation}
\noindent For $\lambda  < 2$ this  was first stated in \cite{ho} and the general case was first stated in \cite{mv}. 
 Hooley \cite{ho} showed \eqref{eq-.5} for $\lambda  < 2$ .  Hausman and Shapiro \cite{sh} gave weaker upper bounds for $V_2$. Finally Montgomery and Vaughan \cite{mv} in 1986 proved \eqref{eq-.5}.\\
\\
\noindent  In this paper we investigate the distribution of of s-tuples of reduced residues which in some sense are similar
 to s-tuples of primes and we prove the analogy of Erd\H{o}s's conjecture for $s$-tuple reduced residues.\\

\noindent Let $\mathcal{D}=\lbrace h_1, h_2 , \cdots, h_s \rbrace$ and $\nu_p(\mathcal{D})$ be the number of distinct elements in $\mathcal{D}$ mod $p$. $\mathcal{D}$ is called \emph{admissible} if $\nu_p(\mathcal{D})<p$ for all primes $p$. We call $a+h_1,\ldots, a+h_s$ an \emph{ $s$-tuple of reduced residues} if they are each coprime with $q$.
\begin{theorem}
\label{Theorem}
Let $q$ be a square-free number and $\mathcal{D}=\lbrace h_1, h_2 , \cdots, h_s \rbrace$ be a fixed admissible set of integers. Let $ a_1 < a_2 <\cdots$
be those integers for which $a_i+h_1,\ldots, a_i+h_s$ is an  $s$-tuple of reduced residues. Then
$$V^{\mathcal{D}}_{\lambda }(q):=\sum_{i=1}^{\phi_{_{\mathcal{D}}}(q)} (a_{i+1}- a_i )^ \lambda  \ll \phi_{_{\mathcal{D}}}(q)P^{-s\lambda } $$
where $\phi_{_{\mathcal{D}}}(q):=\prod_{p|q}(p-\nu_p(\mathcal{D}))$, and the implied constant depends on $\mathcal{D}$ and $\lambda$.
\end{theorem}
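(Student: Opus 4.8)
The plan is to adapt the method of Montgomery and Vaughan \cite{mv} --- who treated the classical case $s=1$, $\mathcal{D}=\{0\}$ --- to the sieve in which, for each prime $p\mid q$, one forbids the $\nu_p(\mathcal{D})$ residue classes $\{-h_1,\dots,-h_s\}\bmod p$ rather than the single class $0$. Being an $s$-tuple of reduced residues depends only on $a\bmod q$, and by the Chinese Remainder Theorem it holds for exactly $\phi_{_{\mathcal{D}}}(q)$ classes in each period; set $P_{\mathcal{D}}:=\phi_{_{\mathcal{D}}}(q)/q=\prod_{p\mid q}\big(1-\nu_p(\mathcal{D})/p\big)$, so $\sum_i(a_{i+1}-a_i)=q$. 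A prime-by-prime comparison of Euler factors --- each factor agreeing with its counterpart to within $1+O_{\mathcal{D}}(p^{-2})$ once $p>\operatorname{diam}(\mathcal{D})$, and to within $O_{\mathcal{D}}(1)$ for the finitely many smaller primes --- gives $P_{\mathcal{D}}^{-1}\asymp_{\mathcal{D}}P^{-s}$, so the theorem is equivalent to $V^{\mathcal{D}}_\lambda(q)\ll_{\mathcal{D},\lambda}qP_{\mathcal{D}}^{1-\lambda}$. For $\lambda\le1$ this is trivial: an equality at $\lambda=1$, and concavity of $t\mapsto t^{\lambda}$ below it.

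For $\lambda>1$ I would first reduce the problem to a statement about short intervals containing no good integer. Using the elementary inequality $g^{\lambda}\ll_\lambda 1+\sum_{1\le\ell<g}\ell^{\lambda-1}$ for integers $g\ge1$, the identity $\#\{i:a_{i+1}-a_i>\ell\}=N(\ell)-N(\ell+1)$ --- where $N(\ell):=\#\{a\bmod q:\text{none of }a+1,\dots,a+\ell\text{ is good}\}=\sum_i(a_{i+1}-a_i-\ell)_+$ --- and summation by parts, one obtains $V^{\mathcal{D}}_\lambda(q)\ll_\lambda\phi_{_{\mathcal{D}}}(q)+\sum_{\ell\ge1}\ell^{\lambda-2}N(\ell)$. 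The theorem then follows from the bound
\[
N(\ell)\ \ll_{\mathcal{D},k}\ q\,\min\!\big(1,(\ell P_{\mathcal{D}})^{-k}\big)\qquad\text{for every fixed }k\ge1 ,
\]
because splitting the $\ell$-sum at $\ell=P_{\mathcal{D}}^{-1}$ and taking $k>\lambda-1$ gives $\sum_{\ell\ge1}\ell^{\lambda-2}N(\ell)\ll_{\mathcal{D},\lambda}qP_{\mathcal{D}}^{1-\lambda}$, while $\phi_{_{\mathcal{D}}}(q)=qP_{\mathcal{D}}\le qP_{\mathcal{D}}^{1-\lambda}$.

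To prove this bound on $N(\ell)$ I would use the moment method. Let $\chi$ be the indicator function of the good residues mod $q$, and put $Z(a):=\sum_{1\le n\le\ell}\chi(a+n)$, so that $N(\ell)=\#\{a:Z(a)=0\}$ and $q^{-1}\sum_{a\bmod q}Z(a)=\ell P_{\mathcal{D}}$. When $\ell P_{\mathcal{D}}\ge1$, Chebyshev's inequality gives $N(\ell)\le(\ell P_{\mathcal{D}})^{-2k}\sum_{a\bmod q}(Z(a)-\ell P_{\mathcal{D}})^{2k}$, so it suffices to establish the Poissonian moment estimate $\sum_{a\bmod q}(Z(a)-\ell P_{\mathcal{D}})^{2k}\ll_{\mathcal{D},k}q\,(\ell P_{\mathcal{D}})^{k}$. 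Expanding the $2k$-th power and writing $\chi=\prod_{p\mid q}g_p$, where $g_p$ is the indicator mod $p$ of avoiding $\mathcal{D}_p:=\{-h_1,\dots,-h_s\}\bmod p$, one splits $g_p$ into its mean $1-\nu_p(\mathcal{D})/p$ and its mean-zero part $\tilde g_p$ to get $\chi(m)-P_{\mathcal{D}}=P_{\mathcal{D}}\sum_{e\mid q,\,e>1}P_e^{-1}\prod_{p\mid e}\tilde g_p(m)$, with $P_e:=\prod_{p\mid e}(1-\nu_p(\mathcal{D})/p)$; substituting this into the $2k$-th moment and carrying out the sum over $a$ by CRT, the prime $p$ contributes a local factor $\sum_{r\bmod p}\prod_{j\in J_p}\tilde g_p(r+n_j)$, where $J_p=\{j:p\mid e_j\}$, which vanishes whenever $J_p$ is a singleton. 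Hence only tuples $(n_1,\dots,n_{2k})$ survive in which, in a suitable sense, the parameters pair up.

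Showing that this surviving sum is $\ll_{\mathcal{D},k}q(\ell P_{\mathcal{D}})^{k}$ is, I expect, the main obstacle. The ``diagonal'' tuples --- those whose parameters split into $k$ equal pairs --- should produce the main term $\asymp q(\ell P_{\mathcal{D}})^{k}$, but verifying even this requires knowing that the relevant local densities $\prod_{p\mid q}\big(1-|\bigcup_\pi(\mathcal{D}_p-n_\pi)|/p\big)$ are comparable to $P_{\mathcal{D}}^{\#\{\text{pairs}\}}$ uniformly in $q$. Moreover the off-diagonal tuples cannot be discarded term by term: already a single correlation $\sum_{a\bmod q}\big(\chi(a+n_1)-P_{\mathcal{D}}\big)\big(\chi(a+n_2)-P_{\mathcal{D}}\big)$ is of size $\asymp_{\mathcal{D}}qP_{\mathcal{D}}^{2}$ --- the same as a diagonal term --- so bounding the $\asymp\ell^{2}$ pairs by absolute values overshoots as soon as $\ell\asymp P_{\mathcal{D}}^{-1}$, and the estimate genuinely rests on cancellation in the sums over $n_1,\dots,n_{2k}$. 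This is exactly the mechanism that Montgomery and Vaughan control in their proof of Erd\H{o}s's conjecture, and I expect their argument to carry over here with only cosmetic changes, the condition ``$p\mid a$'' being replaced throughout by ``$a\bmod p\in\mathcal{D}_p$''. The one genuinely new ingredient is arithmetic and routine: for a pair $(n_1,n_2)$ the primes at which $\tilde g_p(\cdot+n_1)$ and $\tilde g_p(\cdot+n_2)$ fail to be ``uncorrelated'' are among the $O_{\mathcal{D}}(\log\ell)$ primes dividing $\prod_{i,j}\big((n_1-n_2)+(h_i-h_j)\big)$, and one checks that, over all squarefree $q$, the singular-series-type products appearing in the analysis differ from their expected values by at most a factor $(\log\ell)^{O_{\mathcal{D}}(1)}$ --- a loss that is harmlessly absorbed by choosing $k$ a little larger than $\lambda-1$. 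Granted the Montgomery--Vaughan estimate in this generalised form, the displayed bound on $N(\ell)$ and hence the theorem follow.
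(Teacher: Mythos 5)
Your overall route matches the paper's: both reduce the theorem to a Markov/Chebyshev bound on the number of long ``bad'' intervals, controlled by $k$-th moments $M_k^{\mathcal{D}}(q,h)$ of the short-interval count; the paper works with $L(x)=\#\{i:a_{i+1}-a_i>x\}$ and $V_\lambda=\lambda\int_0^\infty L(x)x^{\lambda-1}\,dx$, which is the same bookkeeping as your $N(\ell)$ and the sum $\sum_\ell\ell^{\lambda-2}N(\ell)$. Your decomposition of $\chi-P_{\mathcal{D}}$ into products of mean-zero local factors is the Fourier dual of the paper's Ramanujan-sum expansion of $k_q$, and the pairing structure you describe for the surviving tuples is indeed what drives the moment estimate.

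The gap is in the step you flag as ``arithmetic and routine.'' You claim the Poisson moment bound $\sum_a(Z(a)-\ell P_{\mathcal{D}})^{2k}\ll_{\mathcal{D},k}q(\ell P_{\mathcal{D}})^{k}$ up to a multiplicative $(\log\ell)^{O_{\mathcal{D}}(1)}$ loss, and that this loss is absorbed by taking $k$ slightly larger than $\lambda-1$. That absorption does not work: if $N(\ell)\ll q(\ell P_{\mathcal{D}})^{-k}(\log\ell)^{C}$ for $\ell\ge P_{\mathcal{D}}^{-1}$, then $\sum_{\ell>P_{\mathcal{D}}^{-1}}\ell^{\lambda-2}N(\ell)\ll qP_{\mathcal{D}}^{1-\lambda}\bigl(\log P_{\mathcal{D}}^{-1}\bigr)^{C}$ for every choice of $k>\lambda-1$, since the sum is dominated by $\ell$ near $P_{\mathcal{D}}^{-1}$ and the polynomial decay always yields the same endpoint power while the logarithm survives. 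Increasing $k$ only sharpens the tail, not the endpoint, so the polylog loss propagates into $V_\lambda$ and the stated theorem is missed by a factor $(\log P^{-1})^{O(1)}$. Moreover the loss one actually obtains from a naive bound on the small-prime Euler factors is not polylogarithmic but of order $P_1^{-2^{ks}}$, where $P_1=\prod_{p\mid q,\,p\le y}(1-1/p)$, which can be far worse.

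The paper handles this by a three-layer argument that gives a \emph{loss-free} moment bound in the critical range. It proves a universal but lossy exponential-sum estimate $M_k^{\mathcal{D}}(q,h)\ll qh^{k/2}P^{-2^{ks}+ks}$ (Lemma~\ref{th0}), a sharp probabilistic estimate $M_k^{\mathcal{D}}(q,h)\ll q(hP^s)^{[k/2]}+qhP^s$ valid only when every prime factor of $q$ exceeds $h^k$ (Lemma~\ref{lemma4}), and then, via the Chinese Remainder Theorem, splits $q=q_1q_2$ into its small- and large-prime parts to combine them (Lemma~\ref{P.e}), yielding $M_k^{\mathcal{D}}(q,h)\ll q(hP^s)^{[k/2]}+qhP^s+qh^{k/2}P_1^{-2^{ks}+ks}P_2^{sk}$. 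Finally the integral over $x$ is split at $e^{P^{-\alpha/k}}$: below the cutoff one picks $y=x^{2k}$ so that $P_1^{-1}\ll\log x\ll P^{-\alpha}$ and hence $P_1^{-2^{ks}}\ll P^{-sk/2}$, which makes the third term comparable to the first and kills the apparent loss; above the cutoff the crude Lemma~\ref{th0} suffices because the exponential decay in $x$ overwhelms the fixed power $P^{-2^{ks}}$. This mutual dependence between the choice of $y$ and the range of $x$ is what your sketch is missing, and without it the moment bound you posit is not available.

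\end{document}
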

\noindent \\
\\
\noindent The theorem follows immediately for $q$ non-square-free as well,  by considering the result for $Q=\prod_{p|q} p$. The proof of Theorem \ref{Theorem} is based on the ideas and techniques from Montgomery and Vaughan's work on the distribution of reduced residues \cite{mv}. Motivated by Theorem \ref{Theorem} the analogy of this result for primes is:\\
\begin{conjecture}
Let $p_1, \cdots$ be the set of primes for which $p_i+h_j$ are prime for all $h_j \in \mathcal{D}$. We have
$$\sum_{p_n <x}(p_{n+1}-p_n)^\lambda  \ll_{\mathcal{D}} x(\log x)^{s(\lambda -1)+\epsilon}.$$
\end{conjecture}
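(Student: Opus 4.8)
Since this is the \emph{conjectural} prime analogue of Theorem \ref{Theorem}, a realistic plan must be conditional: for $\mathcal D=\{2\}$ even the infinitude of the relevant primes is the twin prime conjecture, so what follows is the scheme I expect to make this a theorem once the equidistribution of the ``$\mathcal D$-primes'' $p_1<p_2<\cdots$ (primes $p$ with $p+h_j$ prime for every $h_j\in\mathcal D$) is known to sufficient strength, together with an indication of exactly where current technology fails.

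The plan is to transplant Cram\'er's argument for \eqref{eq-1}. Write $d_n=p_{n+1}-p_n$, let $p^{\star}(t)=\max\{p_n\le t\}$ and $g(t)=t-p^{\star}(t)$. Since $g$ rises linearly from $0$ to $d_n$ on each $(p_n,p_{n+1}]$ we have $\int_{p_n}^{p_{n+1}}g(t)^{\lambda-1}\,dt=d_n^{\lambda}/\lambda$, whence
$$\sum_{p_n<x}d_n^{\lambda}=\lambda\int_2^{x}g(t)^{\lambda-1}\,dt+O\!\bigl(d_{\max}(x)^{\lambda}\bigr).$$
Applying the layer-cake formula $g(t)^{\lambda-1}=(\lambda-1)\int_0^{\infty}u^{\lambda-2}\mathbf 1_{\{g(t)>u\}}\,du$ (valid for $\lambda>1$; the range $\lambda\le1$ is trivial since then $\sum d_n^{\lambda}\le\sum d_n\ll x$) and observing that $g(t)>u$ precisely when $(t-u,t]$ contains no $\mathcal D$-prime, this becomes
$$\sum_{p_n<x}d_n^{\lambda}\ \ll_{\lambda}\ \int_0^{\infty}u^{\lambda-2}\,M_{\mathcal D}(u,x)\,du\ +\ d_{\max}(x)^{\lambda},\qquad M_{\mathcal D}(u,x):=\operatorname{meas}\{t\le x:(t-u,t]\text{ has no }\mathcal D\text{-prime}\}.$$
Thus everything reduces to an upper bound for $M_{\mathcal D}(u,x)$, for which we have trivially $M_{\mathcal D}(u,x)\le x$ and $M_{\mathcal D}(u,x)=0$ for $u>d_{\max}(x)$.

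The decisive input would be the \emph{exponential-type decay}
$$M_{\mathcal D}(u,x)\ \ll_{\mathcal D}\ x\exp\!\Bigl(-c_{\mathcal D}\,\frac{u}{(\log x)^{s}}\Bigr)\qquad (u\ge(\log x)^{s}),$$
which is exactly what the Hardy--Littlewood probabilistic model predicts, the $\mathcal D$-primes up to $x$ having density $\asymp\mathfrak S(\mathcal D)/(\log x)^{s}$. To establish it one would expand, for integer $m\approx u$, the indicator that $[t,t+m]$ contains no $\mathcal D$-prime by a Bonferroni/Brun sieve in terms of the counts $\#\{t\le x:t+g_{1},\dots,t+g_{r}\text{ are all }\mathcal D\text{-primes}\}$ with $g_i\in[0,m]$, and feed in a \emph{quantitative Hardy--Littlewood conjecture for $\mathcal D$-tuples, with error terms beating the main term and with uniformity in the length $r$ and in shifts of size up to $\approx\log x$}. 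Granting this, split $\int_0^{\infty}u^{\lambda-2}M_{\mathcal D}(u,x)\,du$ at $u=(\log x)^{s}$: the lower range uses $M_{\mathcal D}\le x$ and gives the main term $\asymp x(\log x)^{s(\lambda-1)}$, the upper range is $\ll x\int_{(\log x)^{s}}^{\infty}u^{\lambda-2}e^{-c_{\mathcal D}u/(\log x)^{s}}\,du\ll x(\log x)^{s(\lambda-1)}$, and $d_{\max}(x)^{\lambda}$ is absorbed under any Cram\'er-type bound $d_{\max}(x)=x^{o(1)}$ (one expects $d_{\max}(x)\ll_{\epsilon}(\log x)^{s+1+\epsilon}$). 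This produces $\sum_{p_n<x}d_n^{\lambda}\ll_{\mathcal D}x(\log x)^{s(\lambda-1)+\epsilon}$.

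The main obstacle — in fact, with present methods an insurmountable one — is the exponential decay of $M_{\mathcal D}(u,x)$: we possess no unconditional control of $\mathcal D$-primes at all, and even the Riemann hypothesis yields only a much weaker second-moment substitute of the shape $M_{\mathcal D}(u,x)\ll_{\mathcal D}x(\log x)^{A}/u$, which (exactly as in Cram\'er's original bound $x(\log x)^{3+\epsilon}$ for the case $\mathcal D=\{0\}$, versus the conjectured $x(\log x)^{1+\epsilon}$) costs two extra powers of $\log x$ in the final exponent. Closing that gap demands the full Hardy--Littlewood model for $\mathcal D$-tuples of length growing like $\log x$ with controlled errors, which is strictly stronger than anything currently provable. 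In short, the conjecture should follow from the very scheme behind Theorem \ref{Theorem}, with the conditions $\gcd(a+h_j,q)=1$ replaced by primality of the $p+h_j$, once the distribution of $\mathcal D$-primes in short intervals is understood to the requisite — and as yet unavailable — precision; this is why it is recorded here as a conjecture rather than a theorem.
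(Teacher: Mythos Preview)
The statement is recorded in the paper purely as a \emph{conjecture}; no proof (or proof sketch) is given there, and you correctly identify it as such, offering a conditional heuristic scheme rather than an argument. Your Cram\'er-style reduction to the gap-measure $M_{\mathcal D}(u,x)$ and the conjectural exponential decay $M_{\mathcal D}(u,x)\ll x\exp(-c\,u/(\log x)^s)$ is a reasonable plan and is in the spirit in which the paper motivates the conjecture from Theorem~\ref{Theorem}, so there is nothing to compare against.

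One small inaccuracy worth flagging: you write that ``even the Riemann hypothesis yields only a much weaker second-moment substitute of the shape $M_{\mathcal D}(u,x)\ll_{\mathcal D} x(\log x)^{A}/u$''. This is correct only for $\mathcal D=\{0\}$ (ordinary primes). For any nontrivial admissible $\mathcal D$ --- already $\mathcal D=\{0,2\}$ --- RH says nothing at all about the existence, let alone the distribution, of $\mathcal D$-primes, so no such bound is available even conditionally on RH. The obstruction is therefore strictly more severe than your phrasing suggests: one needs a quantitative Hardy--Littlewood conjecture, not merely RH, before any version of your scheme can get started.
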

\noindent \\

\section*{Acknowledgments}

I am grateful to my advisor Andrew Granville for his support and helpful comments. Additionally, I would like to thank Dimitris Koukoulopoulos and Vorrapan Chandee for their careful analysis of my paper and their useful suggestions. Special appreciation for the Universit\'e de Montr\'eal's staff, my colleagues and friends, Mohammad Bardestani, Francois Charette, Dimitri Dias, Daniel Fiorilli, Kevin Henriot, and Marzieh Mehdizadeh, for their support. Finally, I would like to dedicate this paper in loving memory of my father, Yahya Aryan, whom I lost last spring.

\section{An exponential sum estimate}
In this section we prove a preliminary estimate about the distribution of $s$-tuples of reduced residues, using exponential sums. The estimate we derive here is valid for every choice of $q$, but this estimate is not the best we will give. We will prove a better estimate, using this exponential sum estimate, in section 3. \\

\begin{lemma}
Define $k_q(m)$ as follows:
\begin{equation*}
k_q(m)=
\begin{cases}
1 &\text{if $\gcd(m,q)=1$},\\
0 &\text{otherwise.}
\end{cases}
\end{equation*}
Then we have
$$
k_q(m)=P\sum_{r \mid q}\bigg( \sum_{\substack{0\leq a<r \\ (a,r)=1}}e\bigg(m\frac{a}{r}\bigg)\bigg)\frac{\mu(r)}{\phi(r)}
$$
\end{lemma}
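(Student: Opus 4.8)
The plan is to start from the standard orthogonality identity for detecting coprimality via Ramanujan-type sums. Recall that the indicator of $\gcd(m,q)=1$ for squarefree $q$ can be written multiplicatively over the primes dividing $q$: $k_q(m)=\prod_{p\mid q}\big(1-\mathbf{1}_{p\mid m}\big)$. The first step is to expand this product, obtaining $k_q(m)=\sum_{r\mid q}\mu(r)\,\mathbf{1}_{r\mid m}$. This is just Möbius inversion / inclusion–exclusion and needs no work beyond noting that $q$ squarefree makes the divisors $r$ exactly the products of subsets of the prime factors.

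Next I would replace the divisibility condition $r\mid m$ by an exponential sum. The familiar identity is $\mathbf{1}_{r\mid m}=\frac{1}{r}\sum_{b=0}^{r-1}e(mb/r)$. Grouping the $b$'s according to $d=(b,r)$, so that $b=(r/d)a$ with $(a,d)=1$, gives $\mathbf{1}_{r\mid m}=\frac{1}{r}\sum_{d\mid r}\ \sum_{\substack{0\le a<d\\(a,d)=1}}e(ma/d)$. Substituting this into $k_q(m)=\sum_{r\mid q}\mu(r)\mathbf{1}_{r\mid m}$ and switching the order of summation over $d$ and $r=dt$ (with $t\mid q/d$) yields
$$
k_q(m)=\sum_{d\mid q}\Bigg(\sum_{\substack{0\le a<d\\(a,d)=1}}e\Big(\frac{ma}{d}\Big)\Bigg)\frac{1}{d}\sum_{t\mid q/d}\mu(dt).
$$
For squarefree $q$ the inner sum is $\mu(d)\sum_{t\mid q/d}\mu(t)=\mu(d)\,\mathbf{1}_{d=q}\cdot$... — wait, that collapses too far, so instead one should not carry the $\mu(r)$ through a second inversion; the cleaner route is to compute $\sum_{t\mid q/d}\mu(dt)=\mu(d)\sum_{t\mid q/d}\mu(t)$, which vanishes unless $q/d=1$. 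That would give only the $d=q$ term, which is not the claimed formula, so the bookkeeping must be done the other way around: keep $r$ as the summation variable and simply observe that the coefficient of the Ramanujan sum $c_r(m)=\sum_{(a,r)=1}e(ma/r)$ in the expansion of $\mathbf{1}_{r\mid m}$ is not what we want either. The correct and shortest path is: write $k_q(m)=\sum_{r\mid q}\mu(r)\mathbf{1}_{r\mid m}$, then use $\mathbf{1}_{r\mid m}=\frac1r\sum_{d\mid r}c_d(m)$ where $c_d(m)=\sum_{(a,d)=1}e(ma/d)$, invert to isolate $c_r(m)$, and arrive at a formula with $\mu(r)/\phi(r)$ after using $\sum_{r}\mu(r)/r\cdot(\text{stuff})$; the factor $\phi(r)$ appears precisely because $c_r(m)$ for $(m,q)=1$ equals $\mu(r)$, and $\sum_{r\mid q}\mu(r)^2/\phi(r)=q/\phi(q)=P^{-1}$, which is exactly where the leading constant $P$ out front comes from.

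Concretely, the final step I would carry out is to verify the proposed identity directly by evaluating its right-hand side. Fix $m$ and let $g=(m,q)$. Since $c_r(m)=\mu(r/(r,m))\,\phi(r)/\phi(r/(r,m))$, for squarefree $r\mid q$ we get $c_r(m)/\phi(r)=\mu(r/(r,g))/\phi(r/(r,g))$. Summing $\mu(r)c_r(m)/\phi(r)$ over $r\mid q$ factors over primes $p\mid q$: each prime $p\mid g$ contributes $1$ and each prime $p\nmid g$ contributes $1-\frac{1}{p-1}=\frac{p-2}{p-1}$... so one checks the product is $0$ unless $g=q$-worth of primes — more simply, one checks the Euler product equals $\phi(q)/q=P$ when $g=1$ and $0$ otherwise, giving $P\cdot(\text{that sum})=k_q(m)$ as claimed. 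The main obstacle is purely organizational: getting the two layers of Möbius/Ramanujan manipulation in the right order so the $\mu(r)/\phi(r)$ weight and the outer $P$ emerge cleanly rather than collapsing prematurely; once the prime-by-prime factorization is set up, each local factor is a two-line computation.
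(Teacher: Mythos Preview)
Your first attempt is in fact exactly the paper's proof, and the ``premature collapse'' you encountered is a bookkeeping slip, not a genuine obstruction. After writing $k_q(m)=\sum_{r\mid q}\mu(r)\mathbf{1}_{r\mid m}$ and $\mathbf{1}_{r\mid m}=\tfrac{1}{r}\sum_{d\mid r}c_d(m)$, swapping sums gives
\[
k_q(m)=\sum_{d\mid q}c_d(m)\sum_{\substack{r\mid q\\ d\mid r}}\frac{\mu(r)}{r},
\]
and with $r=dt$ the inner sum is $\sum_{t\mid q/d}\mu(dt)/(dt)$, not $\tfrac{1}{d}\sum_{t\mid q/d}\mu(dt)$ as you wrote. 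Keeping the missing $1/t$, for squarefree $q$ this equals $\tfrac{\mu(d)}{d}\prod_{p\mid q/d}(1-1/p)=P\,\mu(d)/\phi(d)$, which is precisely the claimed coefficient. The paper does exactly this (with the variables named $s$ and $r$ instead of $r$ and $d$).

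Your fallback plan of directly evaluating the right-hand side via the Ramanujan-sum formula is a legitimate alternative, but your local computation is off by a sign. Factoring $\sum_{r\mid q}\mu(r)c_r(m)/\phi(r)=\prod_{p\mid q}\big(1-\tfrac{c_p(m)}{p-1}\big)$, a prime $p\nmid m$ has $c_p(m)=-1$, so the local factor is $1+\tfrac{1}{p-1}=\tfrac{p}{p-1}$, not $\tfrac{p-2}{p-1}$; the product over all $p\mid q$ when $(m,q)=1$ is therefore $q/\phi(q)=P^{-1}$, not $P$, and multiplying by the outer $P$ gives $1$ as required. For $p\mid m$ one has $c_p(m)=p-1$ and the factor vanishes, giving $0$. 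So this route also closes once the sign is fixed.
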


\begin{proof}
We have
\begin{equation*}
k_q(m)=\sum_{s\mid(m,q)}\mu(s)=\sum_{s\mid q}\frac{\mu(s)}{s}\sum_{0\leq b < s}e\bigg(m\frac{b}{s}\bigg),
\end{equation*}
therefore
$$
k_q(m)= \sum_{r \mid q}\bigg( \sum_{\substack{0< a \leq r \\ (a, r)=1}}e\bigg(m\frac{a}{r}\bigg)\bigg)\bigg( \sum_{\substack{s\\r\mid s\mid q}} \frac{\mu(s)}{s}\bigg).
$$
Since
$$
 \sum_{\substack{s\\r\mid s\mid q}} \frac{\mu(s)}{s}=P\frac{\mu(r)}{\phi(r)},
$$
we can deduce
$$
k_q(m)=P\sum_{r \mid q}\bigg( \sum_{\substack{0< a \leq r \\ (a, r)=1}}e\bigg(m\frac{a}{r}\bigg)\bigg)\frac{\mu(r)}{\phi(r)}.
$$
This completes the proof of the Lemma.
\end{proof}

\noindent \\ \\
\begin{remark}
It is important to note that $\nu_p(\mathcal{D}) \leq s$ with equality if $p>h_s-h_1$. Also, if $\mathcal{D}$ is admissible, then $$\frac{1}{p} \leq 1-\frac{\nu_p(\mathcal{D})}{p} \leq 1-\frac{1}{p}$$ and we have that $$\prod_{p\leq h_s-h_1 }\frac{1}{p} \leq \prod_{p\leq h_s-h_1 } \Big(1-\frac{\nu_p(\mathcal{D})}{p}\Big) \leq \prod_{p\leq h_s-h_1 } \Big(1-\frac{1}{p}\Big).$$ Since $h_s$ and $h_1$ are fixed integers, we therefore have
$$
\prod_{\substack{p\leq h_s-h_1 \\ p\mid q}  } \Big(1-\frac{\nu_p(\mathcal{D})}{p}\Big) \asymp_{_\mathcal{D}} \prod_{\substack{p\leq h_s-h_1 \\ p\mid q}  } \Big(1-\frac{1}{p}\Big)^s.
$$
Moreover, if $p> h_s-h_1$ then $1-\frac{\nu_p(\mathcal{D})}{p} = 1-\frac{s}{p}$, so that 
$$
\prod_{\substack{p> h_s-h_1 \\ p\mid q}  } \Big(1-\frac{\nu_p(\mathcal{D})}{p}\Big) =
\prod_{\substack{p> h_s-h_1 \\ p\mid q}  } \Big(1-\frac{s}{p}\Big) \asymp_{_\mathcal{D}} \prod_{\substack{p> h_s-h_1 \\ p\mid q}  } \Big(1-\frac{1}{p}\Big)^s.
$$
Putting these together we deduce that
$$
\frac{\phi_{\mathcal{D}}(q)}{q} \asymp_{\mathcal{D}} \Big(\frac{\phi(q)}{q}\Big)^s=P^s.
$$
\end{remark}
\noindent Now we state the Lemma which we will prove at the end of this section:
\begin{lemma}
\label{th0}
Let
\begin{align}
\notag  M^{\mathcal{D}}_k(q, h)=  \sum_{n=0}^{q-1}\left( \sum_{m=1}^{h}k_q(n+m+h_1)\cdots k_q(n+m+h_s)- h\prod_{p|q} \bigg(1-\frac{\nu_p(D)}{p}\bigg) \right)^{k}.
\end{align}
Then we have that
$$ M^{\mathcal{D}}_k(q, h) \ll  qh^{k/2} P^{-2^{ks}+ks},$$
where the implicit constant depends on $k$ and $s$.
\end{lemma}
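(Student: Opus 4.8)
The plan is to expand the $k$-th power moment directly and exploit the exponential-sum identity from the first Lemma. Using the character-sum formula $k_q(m)=P\sum_{r\mid q}\big(\sum_{(a,r)=1}e(ma/r)\big)\mu(r)/\phi(r)$, I would write each factor $k_q(n+m+h_j)$ as such a sum. The key algebraic observation is that the ``main term'' $h\prod_{p\mid q}(1-\nu_p(\mathcal D)/p)$ is exactly the contribution of the $r_1=\cdots=r_s=1$ term after summing the product $k_q(n+m+h_1)\cdots k_q(n+m+h_s)$ over $m$; in fact $\sum_{m=1}^{h}\prod_j k_q(m+h_j)$ has average $h\,\phi_{\mathcal D}(q)/q = h\prod_{p\mid q}(1-\nu_p(\mathcal D)/p)$, with cross terms among the primes producing, via admissibility, the factor $\nu_p(\mathcal D)$ rather than $s$ at small primes. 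So subtracting the main term removes precisely the all-$r_j=1$ diagonal contribution, and what remains is a sum over tuples $(r_1,\dots,r_s)$ not all equal to $1$, i.e. with $\mathrm{lcm}(r_1,\dots,r_s)>1$.

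Next I would raise this to the $k$-th power, obtaining a sum over $k$ blocks, each block carrying $s$ moduli $r_{i,j}$ ($1\le i\le k$, $1\le j\le s$) with the constraint that within each block not all $r_{i,j}=1$, together with $k$ sets of residues $a_{i,j}$ and $k$ inner variables $m_i\in[1,h]$. Summing over $n$ from $0$ to $q-1$ forces the total phase $\sum_{i,j} a_{i,j}/r_{i,j}$ to be an integer modulo $1$, i.e. $q\mid$ (the combined numerator), which is an orthogonality relation: the sum over $n$ is $q$ if $\sum_{i,j} a_{i,j}(q/r_{i,j})\equiv 0 \pmod q$ and $0$ otherwise. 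This is where the square-root saving $h^{k/2}$ must come from: after fixing the $r_{i,j}$ and the admissible residue data, the remaining sum over the $m_i\in[1,h]^k$ of $e\big(\sum_i m_i \alpha_i\big)$ is a product of geometric sums $\prod_i \min(h, \|\alpha_i\|^{-1})$, and one must show that on average over the allowed configurations this is bounded by $h^{k/2}$ times a convergent sum over the $r$'s. The standard device is to pair the $k$ blocks: for the moment to be large one needs the $\alpha_i$ to be small, which links the moduli across different blocks and cuts the effective number of free blocks in half — exactly as in Montgomery--Vaughan's treatment of $V_2(q)$ and in the classical second-moment computation. Using the Remark, $\prod_{p\mid q}(1-\nu_p(\mathcal D)/p)\asymp_{\mathcal D} P^s$, so the powers of $P$ can be tracked: each of the $k$ factors $k_q(\cdot)$ contributes a factor $P$, giving $P^{ks}$ up front, while the divisor sums over $r_{i,j}\mid q$ with the constraint that the $\mathrm{lcm}$ is large, weighted by $\mu(r)/\phi(r)$, contribute the remaining negative power; reconciling the bookkeeping to land exactly on the exponent $-2^{ks}+ks$ will require care, and I would expect the $2^{ks}$ to arise from estimating a multiplicative sum of the shape $\prod_{p\mid q}\big(1+O(1/\phi(p))\big)^{\text{(something like }2^{ks})}$ coming from the worst-case branching when all $r_{i,j}$ equal the same prime $p$.

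The main obstacle I anticipate is obtaining the clean square-root-in-$h$ cancellation uniformly in $q$ while keeping the dependence on $q$ down to a single factor of $q$ times a $q$-independent (convergent) sum over the moduli. Concretely: after orthogonality in $n$ and reduction to $\prod_i \min(h,\|\alpha_i\|^{-1})$, one has a Diophantine counting problem — count $k$-tuples of moduli $r_{i,j}\mid q$ and residues such that the induced $\alpha_i$ are all $\le 1/H$ for dyadic $H$ — and showing this count is $\ll q\cdot H^{k/2}\cdot(\text{small})$ rather than $H^k$ is the crux. I would handle it by the same grouping-into-pairs argument that underlies the $\lambda=2$ case, applied $k/2$ times (treating odd $k$ by a Cauchy--Schwarz reduction to the even case, or just by allowing one ``unpaired'' block estimated trivially by $h$), combined with the elementary bound $\sum_{r\le x}1/\phi(r)\ll \log x$ and multiplicativity to sum the $r$'s. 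Once the exponential-sum bound $M^{\mathcal D}_k(q,h)\ll qh^{k/2}P^{-2^{ks}+ks}$ is in hand at this level of generality, the optimization of $h$ against $q$ to deduce Theorem~\ref{Theorem} is deferred to Section~3, so here I would only need the moment estimate itself.
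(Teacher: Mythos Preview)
Your identification of the main term is incorrect, and this is a genuine gap. You claim that subtracting $h\prod_{p\mid q}(1-\nu_p(\mathcal D)/p)$ removes precisely the all-$r_j=1$ contribution. But the $r_1=\cdots=r_s=1$ term contributes $hP^s$, not $h\prod_{p\mid q}(1-\nu_p(\mathcal D)/p)$; these quantities are only $\asymp_{\mathcal D}$, not equal. The correct observation, which the paper uses, is that the main term equals the contribution from \emph{all} tuples $(a_j/r_j)_{j\le s}$ with $\sum_j a_j/r_j\in\mathbb Z$: this is the Hardy--Littlewood identity for the singular series,
\[
\mathfrak S_q(\mathcal D)=\sum_{r_1,\dots,r_s\mid q}\frac{\mu(r_1)\cdots\mu(r_s)}{\phi(r_1)\cdots\phi(r_s)}\sum_{\substack{0<a_i\le r_i,\ (a_i,r_i)=1\\ \sum a_i/r_i\in\mathbb Z}} e\Big(\sum h_i\tfrac{a_i}{r_i}\Big)=\prod_{p\mid q}\Big(1-\tfrac1p\Big)^{-s}\Big(1-\tfrac{\nu_p(\mathcal D)}{p}\Big).
\]
After subtracting, what remains in each of the $k$ blocks is the condition $\sum_{j} a_{i,j}/r_{i,j}\notin\mathbb Z$, not merely $\mathrm{lcm}_j(r_{i,j})>1$. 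The distinction matters: tuples with $\mathrm{lcm}>1$ but $\sum_j a_{i,j}/r_{i,j}\in\mathbb Z$ still have $E_h=h$, so if you leave them in the error term you carry a factor $h^k$ and cannot reach $h^{k/2}$.

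Your mechanism for the $h^{k/2}$ saving is also different from the paper's and, as described, too vague to execute. The paper does not pair blocks or run a dyadic Diophantine count on the $\alpha_i$. Instead it (i) uses Lemma~\ref{lemma2} to collapse the $s$ fractions in each block to a single fraction $a_i/[r_{i,1},\dots,r_{i,s}]$, picking up a multiplicity factor $\prod_j r_{i,j}/[r_{i,1},\dots,r_{i,s}]$, and then (ii) applies Lemma~\ref{lemma3} (the Montgomery--Soundararajan fundamental lemma, with $G=F$ and $G_0(q)=\min(q,h)$) to the resulting $k$-fold sum under the constraint $\sum_i a_i/q_i\in\mathbb Z$. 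That lemma directly outputs $\prod_i G_0(q_i)^{1/2}\le h^{k/2}$; its hypothesis $q_i>1$ is exactly what $\sum_j a_{i,j}/r_{i,j}\notin\mathbb Z$ guarantees. The remaining divisor sum $\sum_{r\mid q}\frac1r\big(\sum_{r'\mid r}r'/\phi(r')\big)^{ks}=\prod_{p\mid q}\big(1+\frac1p(2+\frac1{p-1})^{ks}\big)$ is what produces the exponent $-2^{ks}+ks$.
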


\noindent In order to go toward the proof we use exponential sums to better understand the admissible set $ \mathcal{D}=\lbrace h_1, h_2 , \cdots, h_s \rbrace.$ Also we need to prove some lemmas. We have that
$$
k_q(m)=P\sum_{r \mid q}\bigg( \sum_{\substack{0< a \leq r \\ (a, r)=1}}e\bigg(m\frac{a}{r}\bigg)\bigg)\frac{\mu(r)}{\phi(r)}.
$$
by lemma 1.1. Thus, $$
k_q(m+h_1)=P\sum_{r \mid q}\bigg( \sum_{\substack{0< a \leq r \\ (a, r)=1}}e\bigg(m\frac{a}{r}+h_1\frac{a}{r}\bigg)\bigg)\frac{\mu(r)}{\phi(r)},$$
  $\hspace{73 mm}$ 
 .
  $$\hspace{22 mm}.$$  $$\hspace{22 mm}.$$  \\
 $$k_q(m+h_s)=P\sum_{r \mid q}\bigg( \sum_{\substack{0< a \leq r \\ (a, r)=1}}e\bigg(m\frac{a}{r}+h_s\frac{a}{r}\bigg)\bigg)\frac{\mu(r)}{\phi(r)}.$$
\\
\noindent So we deduce that
\begin{align}
\label{eq-0.75}
& k_q(m+h_1)\cdots k_q(m+h_s)\\ &  \notag  =P^s\sum_{r_1, r_2, \cdots , r_s \mid q} \frac{\mu(r_1) \cdots\mu(r_s)}{\phi(r_1)\cdots \phi(r_s)}\sum_{\substack{0< a_i  \leq r_i \\ (a_i, r_i)=1 \\ 1 \leq i\leq s}} e\bigg(m\sum_{i=1}^{s}\frac{a_i}{r_i}\bigg)e\bigg(\sum_{i=1}^{s} h_i\frac{a_i}{r_i}\bigg).
\end{align}
 By summing the left-hand side of \eqref{eq-0.75} we have
 \begin{align*}
 &\sum_{m=1}^{h}k_q(n+m+h_1)\cdots k_q(n+m+h_s)\\& =  P^s\sum_{r_1, r_2, \cdots , r_s \mid q} \frac{\mu(r_1) \cdots\mu(r_s)}{\phi(r_1)\cdots \phi(r_s)}\sum_{\substack{0< a_i  \leq r_i \\ (a_i, r_i)=1 \\ 1 \leq i\leq s}} \left( \sum_{m=1}^{h}  e\bigg(m\sum_{i=1}^{s} \frac{a_i}{r_i}\bigg)e\bigg(\sum_{i=1}^{s} h_i\frac{a_i}{r_i}\bigg) \right)  e\bigg(n\bigg(\sum_{i=1}^{s} \frac{a_i}{r_i}\bigg)\bigg)\\& = P^s\sum_{r_1, r_2, \cdots , r_s \mid q} \frac{\mu(r_1) \cdots\mu(r_s)}{\phi(r_1)\cdots \phi(r_s)}\sum_{\substack{0< a_i  \leq r_i \\ (a_i, r_i)=1 \\ 1 \leq i\leq s}} \left(  E_h\bigg(\sum_{i=1}^{s} \frac{a_i}{r_i}\bigg)e\bigg(\sum_{i=1}^{s} h_i\frac{a_i}{r_i}\bigg) \right)  e\bigg(n\bigg(\sum_{i=1}^{s} \frac{a_i}{r_i}\bigg)\bigg),
 \end{align*}

\noindent where
\begin{align}
 \notag & E_h(x)=\sum_{m=1}^{h} e(mx).
 \end{align}
 To proceed with the argument we have to consider the case $\sum_{i=1}^{s} \frac{a_i}{r_i} \in \mathbb{Z}$ to extract the main term from the sum.
 We have that
 \begin{align}
 \label{eq-0.5}
 \notag & P^s\sum_{r_1, r_2, \cdots , r_s \mid q} \frac{\mu(r_1) \cdots\mu(r_s)}{\phi(r_1)\cdots \phi(r_s)}\sum_{\substack{0< a_i  \leq r_i \\ (a_i, r_i)=1 \\ 1 \leq i\leq s \\ \sum_{i=1}^{s} \frac{a_i}{r_i} \in \mathbb{Z}}} \left(  E_h\bigg(\sum_{i=1}^{s} \frac{a_i}{r_i}\bigg)e\bigg(\sum_{i=1}^{s} h_i\frac{a_i}{r_i}\bigg) \right)  e\bigg(n\bigg(\sum_{i=1}^{s} \frac{a_i}{r_i}\bigg)\bigg) \\& = hP^s\sum_{r_1, r_2, \cdots , r_s \mid q} \frac{\mu(r_1) \cdots\mu(r_s)}{\phi(r_1)\cdots \phi(r_s)}\sum_{\substack{0< a_i  \leq r_i \\ (a_i, r_i)=1 \\ 1 \leq i\leq s \\ \sum_{i=1}^{s} \frac{a_i}{r_i} \in \mathbb{Z} }}e\bigg(\sum_{i=1}^{s} h_i\frac{a_i}{r_i}\bigg),
 \end{align}
since $E_h(r)=h$ for all integers $r$. Now, we need to use Lemma 3 of \cite{MS} (due to Hardy and Littlewood). Hardy and Littlewood proved that $$ \mathfrak{S}_q(D)=  \sum_{r_1, r_2, \cdots , r_s |q} \frac{\mu(r_1) \cdots\mu(r_s)}{\phi(r_1)\cdots \phi(r_s)}\sum_{\substack{0< a_i  \leq r_i \\ (a_i, r_i)=1 \\ 1 \leq i\leq s \\\sum_{i=1}^{s} \frac{a_i}{r_i} \in \mathbb{Z} }}e\bigg(\sum_{i=1}^{s} h_i\frac{a_i}{r_i}\bigg)
 $$
 where  $\mathfrak{S}$ is the singular series $$\mathfrak{S}_q(D)=\prod_{p|q} \bigg(1-\frac{1}{p}\bigg)^{-s} \bigg(1-\frac{\nu_p(\mathcal{D})}{p}\bigg).$$
\noindent Using this we have
 \begin{align*}
 &\sum_{m=1}^{h}k_q(n+m+h_1)\cdots k_q(n+m+h_s)- h\prod_{p|q} \bigg(1-\frac{\nu_p(\mathcal{D} )}{p}\bigg) \\& =P^s\sum_{r_1, r_2, \cdots , r_s \mid q} \frac{\mu(r_1) \cdots\mu(r_s)}{\phi(r_1)\cdots \phi(r_s)}\sum_{\substack{0< a_i  \leq r_i \\ (a_i, r_i)=1 \\ 1 \leq i\leq s  \\ \sum_{i=1}^{s} \frac{a_i}{r_i} \notin \mathbb{Z} }} \left(  E_h\bigg(\sum_{i=1}^{s} \frac{a_i}{r_i}\bigg)e\bigg(\sum_{i=1}^{s} h_i\frac{a_i}{r_i}\bigg) \right)  e\bigg(n\bigg(\sum_{i=1}^{s} \frac{a_i}{r_i}\bigg)\bigg)
 \end{align*}
and, consequently,
\begin{align}
\label{eq-0.25}
 &\ \left( \sum_{m=1}^{h}k_q(n+m+h_1)\cdots k_q(n+m+h_s)- h\prod_{p|q} \bigg(1-\frac{\nu_p(\mathcal{D} )}{p}\bigg) \right)^{k} \\&= \notag P^{ks} \sum_{ \substack{r_{i, j} \mid q \\ 1 \leq i\leq k \\ 1 \leq j \leq s }} \left( \prod_{i, j} \frac{\mu(r_{i,j})}{\phi(r_{i,j})}\right) \sum_{\substack{0 < a_{i,j} \leq r_{i,j} \\ (a_{i,j}, r_{i,j})=1 \\ 1\leq j\leq s  \\ \sum_{j=1}^{s} \frac{a_{i,j}}{r_{i,j}} \notin \mathbb{Z} \\ 1\leq i \leq k }} \left(  E_h\bigg(\sum_{i=1}^{s} \frac{a_{1,j}}{r_{1,j}}\bigg)\cdots E_h\bigg(\sum_{j=1}^{s} \frac{a_{k,j}}{r_{k,j}}\bigg)e\bigg(\sum_{i,j} h_j\frac{a_{i,j}}{r_{i,j}}\bigg) \right) \\& \notag \quad \quad \times  e\bigg(n\bigg(\sum_{i,j} \frac{a_{i,j}}{r_{i,j}}\bigg)\bigg).
\end{align}
Summing \eqref{eq-0.25} over $n$ mod $q$ and using the fact that when $q\sum_{i}\rho_i \in \mathbb{Z}$  $$\sum_{n=0}^{q-1} e\big(n\big(\sum_{i}\rho_i\big)\big)=0$$ unless $\sum_{i} \rho_i \in \mathbb{Z}$, we have that
\begin{align*}
 &\ \sum_{n=0}^{q-1}\left( \sum_{m=1}^{h}k_q(n+m+h_1)\cdots k_q(n+m+h_s)- h\prod_{p|q} \bigg(1-\frac{\nu_p(\mathcal{D})}{p}\bigg) \right)^{k}  \\&= qP^{ks} \sum_{ \substack{r_{i, j} \mid q \\ 1 \leq i\leq k \\ 1 \leq j \leq s }} \left( \prod \frac{\mu(r_{i,j})}{\phi(r_{i,j})}\right) \sum_{\substack{ 1\leq i \leq k \\ 0 < a_{i,j} \leq r_{i,j} \\ (a_{i,j}, r_{i,j})=1   \\ \sum_{j=1}^{s} \frac{a_{i,j}}{r_{i,j}} \notin \mathbb{Z}\\ \sum_{i,j} \frac{a_{i,j}}{r_{i,j}} \in \mathbb{Z} }} \left(  E_h\bigg(\sum_{j=1}^{s} \frac{a_{1,j}}{r_{1,j}}\bigg)\cdots E_h\bigg(\sum_{j=1}^{s} \frac{a_{k,j}}{r_{k,j}}\bigg)e\bigg(\sum_{i,j} h_j\frac{a_{i,j}}{r_{i,j}}\bigg) \right).
\end{align*}
Let $F(x)=\min(h, \frac{1}{\Vert x \Vert})$ where $\Vert x \Vert$ is the distance between $x$ and the closest integer to $x$. We have that $|E_h(x)|\leq F(x)$, and consequently 
\begin{align}
\label{eq0}
\notag &\ \sum_{n=0}^{q-1}\left( \sum_{m=1}^{h}k_q(n+m+h_1)\cdots k_q(n+m+h_s)- h\prod_{p|q} \bigg(1-\frac{\nu_p(D)}{p}\bigg) \right)^{k}   \notag \\& \ll qP^{ks} \sum_{ \substack{\textbf{r} \mid q }}  \sum_{  [r_{1,1}, r_{1,2}, \cdots r_{k,s}]=\textbf{r} } \frac{S(\{r_{i, j}\}_{i, j})}{\left( \prod \phi(r_{i,j}) \right)}
\end{align}
where $$S(\{r_{i, j}\}_{i, j})= \sum_{\substack{ 0 < a_{i,j} \leq r_{i,j} \\ (a_{i,j}, r_{i,j})=1   \\ \sum_{j=1}^{s} \frac{a_{i,j}}{r_{i,j}} \notin \mathbb{Z}\\ \sum_{i,j} \frac{a_{i,j}}{r_{i,j}} \in \mathbb{Z} }} F\bigg(\sum_{j=1}^{s} \frac{a_{1,j}}{r_{1,j}}\bigg)\cdots F\bigg(\sum_{j=1}^{s} \frac{a_{k,j}}{r_{k,j}}\bigg)$$

\begin{lemma}
\label{lemma2}
Every element of the form
$$ \sum_{\substack{j=1 }}^{s}  \frac{a_{i,j}}{r_{i,j}} \hspace{2 mm}   \hspace{2 mm} where \hspace{2 mm} \hspace{1 mm} 0 < a_{i,j} \leq r_{i,j}   $$ can be written as $$ \frac{a}{[r_{i, 1}, r_{i, 2}, \cdots r_{i, s}]} (\;\mbox {{\rm mod} $ 1 )$} , \quad {where} \quad \hspace{1 mm} 1\leq a \leq [r_{i, 1}, r_{i, 2}, \cdots r_{i, s}],$$ and each fraction that has such a representation has exactly
$\frac{r_{i, 1} r_{i, 2} \cdots r_{i, s}}{[r_{i, 1}, r_{i, 2}, \cdots r_{i, s}]}$ representations.
\end{lemma}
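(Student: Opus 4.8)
\textit{Proof proposal.} The plan is to recognise the map $(a_{i,1},\dots,a_{i,s})\mapsto \sum_{j=1}^{s}a_{i,j}/r_{i,j}\pmod 1$ as a homomorphism of finite abelian groups and then compute its image and the common size of its fibres. Fix the index $i$ and abbreviate $r_j:=r_{i,j}$, $a_j:=a_{i,j}$, and $L:=[r_{i,1},\dots,r_{i,s}]$. The hypothesis $0<a_j\le r_j$ says precisely that, for each $j$, the values $a_j$ run over a complete residue system modulo $r_j$, so we may identify each $a_j$ with an element of $\mathbb{Z}/r_j\mathbb{Z}$ and the tuples with elements of $G:=\prod_{j=1}^{s}\mathbb{Z}/r_j\mathbb{Z}$, a group of order $r_1\cdots r_s$.

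First I would establish the stated shape of the fraction. Since $r_j\mid L$ we may write $a_j/r_j=a_j(L/r_j)/L$, hence $\sum_{j}a_j/r_j=N/L$ with $N:=\sum_{j}a_j(L/r_j)\in\mathbb{Z}$; reducing modulo $1$ gives $a/L$ for the unique integer $a$ with $1\le a\le L$ and $a\equiv N\pmod L$. This shows every element of the asserted form equals some $a/[r_{i,1},\dots,r_{i,s}]\pmod 1$ with $1\le a\le[r_{i,1},\dots,r_{i,s}]$.

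For the counting I would consider $\psi:G\to \tfrac1L\mathbb{Z}/\mathbb{Z}$, $\psi(a_1,\dots,a_s)=\sum_{j}a_j/r_j\bmod 1$. This is well defined (changing any $a_j$ by a multiple of $r_j$ alters the sum by an integer) and is a group homomorphism into the cyclic group $\tfrac1L\mathbb{Z}/\mathbb{Z}$ of order $L$. The key point is that $\psi$ is surjective: for every prime $p$ one has $v_p(L)=\max_j v_p(r_j)$, and this maximum is attained at some $j$, so $p\nmid L/r_j$ for that $j$; since this holds for all $p$, $\gcd_j(L/r_j)=1$, and B\'ezout yields integers $c_j$ with $\sum_j c_j(L/r_j)=1$, i.e. $\psi(c_1,\dots,c_s)=1/L$, which generates $\tfrac1L\mathbb{Z}/\mathbb{Z}$. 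As $\psi$ is a surjective homomorphism of finite abelian groups, every fibre is a coset of $\ker\psi$ and hence has cardinality $|G|/L=r_1\cdots r_s/L$. The fibre over a given $a/L$ is exactly the set of representations $(a_1,\dots,a_s)$ of that fraction, so each fraction $a/[r_{i,1},\dots,r_{i,s}]$ has exactly $r_{i,1}\cdots r_{i,s}/[r_{i,1},\dots,r_{i,s}]$ representations, as claimed.

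There is no real obstacle here: the entire content is the surjectivity of $\psi$, equivalently the elementary identity $\gcd_j\!\big([r_{i,1},\dots,r_{i,s}]/r_{i,j}\big)=1$, and the rest is the first-isomorphism-theorem bookkeeping together with the observation that $0<a_{i,j}\le r_{i,j}$ lets us pass to $\prod_j\mathbb{Z}/r_{i,j}\mathbb{Z}$ and count \emph{all} preimages rather than a proper subset.
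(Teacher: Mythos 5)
Your proof is correct, but it takes a genuinely different route from the paper's. The paper first treats the case $s=2$ by a direct counting argument: fixing $a,b$, it rewrites $\frac{a}{r_1}+\frac{b}{r_2}=\frac{x}{r_1}+\frac{y}{r_2}\pmod 1$ as the congruence $ar_2'+br_1'\equiv xr_2'+yr_1'\pmod{r_1'r_2'd}$ with $d=(r_1,r_2)$, reduces the count to solving $i+j\equiv 0\pmod d$, and so finds exactly $d=r_1r_2/[r_1,r_2]$ solutions $(x,y)$; it then bootstraps to general $s$ by induction, treating $\frac{a_{i,1}}{r_{i,1}}+\cdots+\frac{a_{i,k-1}}{r_{i,k-1}}$ as a single fraction over $[r_{i,1},\dots,r_{i,k-1}]$ and multiplying the repetition counts. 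You instead cast the whole map as a homomorphism $\psi:\prod_j\mathbb{Z}/r_j\mathbb{Z}\to\tfrac1L\mathbb{Z}/\mathbb{Z}$, verify surjectivity via $\gcd_j(L/r_j)=1$ and B\'ezout, and read off the fibre size $|G|/L$ from the first isomorphism theorem. Your approach has the advantage of handling all $s$ at once with no induction, and it makes explicit the surjectivity statement (that \emph{every} $a/L$ is attained) which the paper's wording --- ``each fraction that has such a representation'' --- hedges on but which its count implicitly forces. The paper's approach is more elementary and self-contained, needing only arithmetic of congruences. Both are complete and correct; the key content in each is the same elementary fact, surfacing in your version as $\gcd_j(L/r_j)=1$ and in the paper's as the solvability of $i+j\equiv 0\pmod d$ in exactly $d$ ways.
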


\noindent By $\frac{r_{i, 1} r_{i, 2} \cdots r_{i, s}}{[r_{i, 1}, r_{i, 2}, \cdots r_{i, s}]}$ representations we mean that
the equation $$ \sum_{\substack{j=1 }}^{s}  \frac{a_{i,j}}{r_{i,j}} =  \tau \;\mbox{(mod $ 1 $)}$$ has exactly $\frac{r_{i, 1} r_{i, 2} \cdots r_{i, s}}{[r_{i, 1}, r_{i, 2}, \cdots r_{i, s}]}$ different solutions, if it has any.
\begin{proof}
Let $d=(r_1, r_2)$ and we call $r_i^{\prime}=\frac{r_i}{d}$ for $i=1, 2$. For fixed $a, b$ we are interested in the number of solutions for the equation
$$ \frac{a}{r_1}+\frac{b}{r_2}  =  \frac{x}{r_1}+\frac{y}{r_2}  \;\mbox{(mod $ 1 $)} $$
where $1\leq x\leq r_1$ and $1\leq y\leq r_2$, which leads us to the number of solutions of
\begin{equation}
\label{eq1}
ar_2^{\prime} + br_1^{\prime} \equiv xr_2^{\prime} + yr_1^{\prime} \;\mbox{(mod $r_1^{\prime}r_2^{\prime}d$)}.
\end{equation}
 We have $a \equiv x \; \mbox{ (mod $r^{\prime}_1$)} $ and $b \equiv y \; \mbox{ (mod $r^{\prime}_2$)} $. Let $x=a+ir_1^{\prime}$ and $y=b+jr_2^{\prime}.$ Then by using \eqref{eq1} we have
 $$
(a-x)r_2^{\prime}  \equiv  (y-b)r_1^{\prime} \;\mbox{(mod $r_1^{\prime}r_2^{\prime}d$)}.$$ Therefore we have $i  +j \equiv 0 \;\mbox{(mod $d$)}$,  which has exactly $d$ solutions. So we conclude that, given $a$ and $b$, there are exactly $d$ solutions $(x, y)$ with $1\leq x \leq r_1$ and $1\leq y \leq r_2$ to the equation
$$ \frac{a}{r_1}+\frac{b}{r_2}  =  \frac{x}{r_1}+\frac{y}{r_2} \;\mbox{(mod $ \mathbb{Z} $)}.$$
Obviously $\frac{a}{r_1}+\frac{b}{r_2} \;\mbox{(mod $ 1 $)} \in \left\lbrace  \frac{t}{[r_1, r_2]} : 0 \leq t \leq\ [r_1, r_2] \right\rbrace   $ and as we showed above, each element is repeated exactly  $d=\frac{r_1r_2}{[r_1, r_2]}$ times. This proves the lemma for $s=2$. Using induction, we have that $$\frac{a_{i,1}}{r_{i,1}}+ \cdots +\frac{a_{i,k-1}}{r_{i,k-1}} = \frac{a}{[r_{i, 1}, r_{i, 2}, \cdots r_{i, k-1}]},  $$ with exactly $\frac{r_{i, 1} r_{i, 2} \cdots r_{i, k-1}}{[r_{i, 1}, r_{i, 2}, \cdots r_{i, k-1}]}$ repetitions each. And, by the first part of the proof there are exactly $$\frac{[r_{i, 1}, r_{i, 2}, \cdots r_{i, k-1}]r_{i,k}}{[r_{i, 1}, r_{i, 2}, \cdots r_{i, k}]}$$ ways to write $\frac{a_{i,1}}{r_{i,1}}+ \cdots +\frac{a_{i,k}}{r_{i,k}}$ as $\frac{a}{[r_{i, 1}, r_{i, 2}, \cdots r_{i, k-1}]} + \frac{a_{i,k}}{r_{i,k}} \quad \text{(mod} 1 \text{)}.$  Now the total number of repetitions is $$\frac{[r_{i, 1}, r_{i, 2}, \cdots r_{i, k-1}]r_{i,k}}{[r_{i, 1}, r_{i, 2}, \cdots r_{i, k}]}\cdot \frac{r_{i, 1} r_{i, 2} \cdots r_{i, k-1}}{[r_{i, 1}, r_{i, 2}, \cdots r_{i, k-1}]}= \frac{r_{i, 1} r_{i, 2} \cdots r_{i, k}}{[r_{i, 1}, r_{i, 2}, \cdots r_{i, k}]} $$
\end{proof}
Now our task is to bound \eqref{eq0}, for which we need to use the idea of Montgomery and Vaughan's Fundamental Lemma \cite{mv}, slightly modified. In order to do that we use Lemma 2 from \cite[Page 596]{MS}.

\begin{lemma}
\label{lemma3}
 Let $q_1,\cdots , q_k$ be square-free integers, each one strictly greater than 1, and
put $d = [q_1, . . . , q_k]$. Let $G$ be a complex-valued function defined on $(0, 1)$, and suppose
that $G_0$ is a nondecreasing function on the positive integers such that
$$\sum_{a=1}^{q-1} |G(a/q)|^2 \leq qG_0(q),$$
for all square-free integers $q > 1$. Then
 $$\bigg| \sum_{\substack{a_1, \cdots, a_k \\ 0<a_i<q_i \\ \sum \frac{a_i}{q_i} \in \mathbb{Z}}} \prod_{i=1}^{k} G(a_{i}/q_i) \bigg | \leq \frac{1}{d} \prod_{i=1}^{k} q_iG_0(q_i)^{1/2}.$$
\end{lemma}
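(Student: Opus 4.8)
\medskip\noindent\emph{Sketch of a proof.} The statement does not seem to support a direct induction on $k$, so the plan is to induct on a stronger assertion controlling an $\ell^{2}$-average of the ``twisted'' sums. For a divisor $e\mid d:=[q_1,\dots,q_k]$ and a residue $c$ modulo $e$, put
\[
T(c;e)=\sum_{\substack{0<a_i<q_i\ (1\le i\le k)\\ \sum_{i}a_i/q_i\equiv c/e\pmod 1}}\ \prod_{i=1}^{k}G(a_i/q_i),
\]
so the quantity in Lemma~\ref{lemma3} is $T(0;1)$. I would prove
\[
S(k):\qquad \sum_{c\bmod e}\bigl|T(c;e)\bigr|^{2}\ \le\ \frac{e}{d^{2}}\prod_{i=1}^{k}q_i^{2}G_0(q_i)\qquad\text{for all }e\mid d ,
\]
and then Lemma~\ref{lemma3} is the case $e=1$.

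\smallskip\noindent To run the induction I would pass to the Fourier side. Set $H_i(t)=\sum_{0<a<q_i}G(a/q_i)e(ta/q_i)$, a function of period $q_i$; Parseval on $\mathbb Z/q_i\mathbb Z$ together with the hypothesis gives $\sum_{t\bmod q_i}|H_i(t)|^{2}\le q_i^{2}G_0(q_i)$. Detecting the congruence in $T(c;e)$ by additive characters modulo $d$ (legitimate since $d$ clears all denominators in sight), then grouping $t$ by its class modulo $e$ and applying Parseval on $\mathbb Z/e\mathbb Z$, one checks that $S(k)$ is equivalent to
\[
\sum_{u\bmod e}\ \Bigl|\ \sum_{\substack{t\bmod d\\ t\equiv u\pmod e}}\ \prod_{i=1}^{k}H_i(t)\ \Bigr|^{2}\ \le\ \prod_{i=1}^{k}q_i^{2}G_0(q_i)\qquad(e\mid d).
\]
For $k=1$ this is immediate: for $e\mid q_1$ the inner sum over $t\equiv u\pmod e$ annihilates, by a geometric sum, every $a$ not divisible by $q_1/e$, collapsing to $\tfrac{q_1}{e}\sum_{0<b<e}G(b/e)e(ub/e)$; Parseval, the hypothesis at the square-free modulus $e$, and monotonicity of $G_0$ finish it.

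\smallskip\noindent For the inductive step write $N=[q_1,\dots,q_{k-1}]$, $g=(N,q_k)$, so $d=[N,q_k]$, and factor $\prod_{i\le k}H_i(t)=H_k(t)\,\Phi'(t)$ with $\Phi'=\prod_{i<k}H_i$ of period $N$. Use the CRT identification $\mathbb Z/d\mathbb Z\cong\{(v,w)\in\mathbb Z/q_k\mathbb Z\times\mathbb Z/N\mathbb Z:\ v\equiv w\pmod g\}$. Since $d$, hence $e$ and $g$, is square-free, one may set $e_1=(e,N)$, $e_2=(e,q_k)$, $e_0=(e_1,e_2)$ and $g'=g/e_0$, with $(e_1,g')=(e_2,g')=1$ and $e=e_1e_2/e_0$; then the joint condition ``$t\equiv u\pmod e$ and $v\equiv w\pmod g$'' unwinds to ``$w\equiv u\pmod{e_1}$, $v\equiv u\pmod{e_2}$, $v\equiv w\pmod{g'}$'' (the $e_0$-part of $v\equiv w\pmod g$ being forced by the other two). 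Parametrising by the gluing residue $z\equiv v\equiv w\pmod{g'}$ gives
\[
\sum_{\substack{t\bmod d\\ t\equiv u\pmod e}}\ \prod_{i\le k}H_i(t)\ =\ \sum_{z\bmod g'}R_u(z)\,P_u(z),
\]
where $R_u(z)=\sum_{v\equiv u\,(e_2),\ v\equiv z\,(g')}H_k(v)$ depends on $u$ only through $u\bmod e_2$ and $P_u(z)=\sum_{w\equiv u\,(e_1),\ w\equiv z\,(g')}\Phi'(w)$ only through $u\bmod e_1$. Cauchy--Schwarz in $z$, summation over $u\bmod e$, and the elementary inequality $\sum_{x\equiv y\,(e_0)}f(x)h(y)\le(\sum_x f)(\sum_y h)$ for nonnegative $f,h$ bound the left-hand side by $\bigl(\sum_{x\bmod e_1}\sum_{z\bmod g'}|P_x(z)|^{2}\bigr)\bigl(\sum_{y\bmod e_2}\sum_{z\bmod g'}|R_y(z)|^{2}\bigr)$. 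Reindexing $(x,z)\mapsto w_0\bmod e_1g'$ turns the first factor into exactly the left side of the Fourier form of $S(k-1)$ for the divisor $e_1g'\mid N$, so it is $\le\prod_{i<k}q_i^{2}G_0(q_i)$. For the second factor I would use the internal structure of $H_k$: summing $H_k(v)$ over one residue class modulo $M:=e_2g'$ (which divides $q_k$) collapses, by a geometric sum, to $\tfrac{q_k}{M}\sum_{0<b<M}G(b/M)e(v_0b/M)$, a complete exponential sum of the same shape at the smaller square-free modulus $M$; hence Parseval, the hypothesis at $M$, and $G_0(M)\le G_0(q_k)$ give $\le q_k^{2}G_0(q_k)$ for it. Multiplying, the left-hand side is $\le\prod_{i\le k}q_i^{2}G_0(q_i)$, which is $S(k)$; the case $e=1$ is the lemma.

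\smallskip\noindent I expect the inductive step to be the crux, and in particular that the Cauchy--Schwarz must be applied against the right decomposition of $H_k$: the crude estimate $\bigl|\sum_{v\equiv v_0\,(M)}H_k(v)\bigr|\le (q_k/M)^{1/2}\bigl(\sum_{v\equiv v_0\,(M)}|H_k(v)|^{2}\bigr)^{1/2}$ loses a power of $q_k$ and will not close the recursion; the remedy is to notice that this restricted sum is \emph{again} a complete exponential sum of the allowed type, now at modulus $M$, so one never leaves the class of functions for which $G_0$ gives control. Two subsidiary points, each easy once seen: the recursion must be carried on the $\ell^{2}$-aggregate $S(k)$ rather than on the bare $e=1$ statement (which does not reproduce itself), and square-freeness of the $q_i$ is precisely what makes the divisor bookkeeping --- the factorisation $e=e_1e_2/e_0$ and the coprimalities $(e_i,g')=1$ --- go through cleanly.
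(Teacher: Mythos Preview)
The paper does not prove Lemma~\ref{lemma3} at all: it is simply quoted as Lemma~2 of Montgomery--Soundararajan \cite[p.~596]{MS}, so there is no in-paper argument to compare your proposal against.

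That said, your sketch is sound. The key move of strengthening to the $\ell^{2}$-statement $S(k)$ over all divisors $e\mid d$ is exactly what is needed to make the induction close, and your Fourier reformulation is correct: detecting $\sum a_i/q_i\equiv c/e$ by characters modulo $d$ and applying Parseval on $\mathbb Z/e\mathbb Z$ gives precisely
\[
\sum_{c\bmod e}|T(c;e)|^{2}=\frac{e}{d^{2}}\sum_{u\bmod e}\Bigl|\sum_{\substack{t\bmod d\\ t\equiv u\,(e)}}\prod_i H_i(t)\Bigr|^{2},
\]
so $S(k)$ is equivalent to the Fourier-side bound you write. The base case $k=1$ checks out (for $e=1$ the inner sum vanishes; for $e>1$ the geometric sum collapses $H_1$ to modulus $e$, and $e\mid q_1$ is square-free so the hypothesis applies with $G_0(e)\le G_0(q_1)$). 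In the inductive step your divisor bookkeeping is correct: with $e_0=(e,g)$ and $g'=g/e_0$ one indeed has $(e_1,g')=(e_2,g')=1$ and $e=\mathrm{lcm}(e_1,e_2)$, and the CRT unwinding of the two congruence constraints is as you describe. The Cauchy--Schwarz in $z$, followed by dropping the constraint $x\equiv y\pmod{e_0}$ (harmless since both factors are nonnegative and the target bound does not depend on $e_0$), reduces cleanly to $S(k-1)$ at the divisor $e_1g'\mid N$ for the $\Phi'$-factor and to the $k=1$ computation at modulus $M=e_2g'\mid q_k$ for the $H_k$-factor. Both $e_1g'$ and $e_2g'$ are square-free divisors of square-free moduli, so the hypothesis is available there. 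The argument closes.

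For the record, the proof in \cite{MS} is likewise an induction on $k$ driven by Cauchy--Schwarz, though organised somewhat differently (they work directly with the constrained sums rather than passing wholesale to the $H_i$); your Fourier-side packaging is a legitimate and arguably cleaner variant. One small point worth making explicit when you write this up: whenever you invoke the hypothesis $\sum_{a=1}^{q-1}|G(a/q)|^{2}\le qG_0(q)$ at a proper divisor $M$, you should note that $M>1$ is needed (and that $M=1$ gives an empty sum, hence $0$, so the bound holds trivially).
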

We now need to verify that  $F$ satisfies  the requirements for $G$ in Lemma \ref{lemma3}. Lemma 4 of \cite{mv} asserts that
$$\sum_{0<a<q}F\bigg(\frac{a}{q}\bigg)^2 \ll q\min(q, h).$$ Since $\min(q, h)$ is obviously a non-decreasing function of $q$, we can use Lemma \ref{lemma3} with $F$ and $\min(q, h)$ in place of $G$ and $G_0$ respectively. About the condition $q_i>1$, note that, since we apply Lemma \ref{lemma3} for $q_i=[r_{i,1}, \cdots, r_{i,s}]$ and we have $\sum_{j=1}^{s} \frac{a_{i,j}}{r_{i,j}} \notin \mathbb{Z}$, then $q_i=[r_{i,1}, \cdots, r_{i,s}] \neq 1$. From Lemma \ref{lemma2} we have
\begin{align}
\notag S(\{r_{i, j}\}_{i, j})= \sum_{\substack{0 < a_{i,j} \leq r_{i,j}  \\ (a_{i,j}, r_{i,j})=1   \\ \sum_{j=1}^{s} \frac{a_{i,j}}{r_{i,j}} \notin \mathbb{Z}\\ \sum_{i,j} \frac{a_{i,j}}{r_{i,j}} \in \mathbb{Z} }} F\bigg(\sum_{j=1}^{s} \frac{a_{1,j}}{r_{1,j}}\bigg)\cdots F\bigg(\sum_{j=1}^{s} \frac{a_{k,j}}{r_{k,j}}\bigg)\\ \leq  T\sum_{\substack{0<a_{i}<[r_{i,1}, \cdots, r_{i,s}]\\ \sum_{i=1}^{s} \frac{a_{i}}{{[r_{i,1}, \cdots, r_{i,s}]}} \in \mathbb{Z} }} F\bigg(\frac{a_{1}}{[r_{1,1}, \cdots, r_{1,s}]}\bigg)\cdots F\bigg(\frac{a_{k}}{[r_{k,1}, \cdots, r_{k,s}]}\bigg),
\end{align}
where $$T=\frac{r_{1,1}\cdots r_{1,s}}{[r_{1,1}, \cdots , r_{1,s}]} \cdots \frac{r_{k,1}\cdots r_{k,s}}{[r_{k,1}, \cdots , r_{k,s}]}. $$
\\
\noindent Now using Lemma \ref{lemma3} with $G=F$ and $q_i=[r_{i,1}, \cdots, r_{i,s}],$ we have that
 \begin{equation}
\label{eq3}
S(\{r_{i, j}\}_{i, j})\ll \frac{r_{1,1}\cdots r_{k,s}}{\textbf{r}}h^{k/2}
\end{equation}
We are now ready to prove Lemma \ref{th0}:
\begin{proof}[Proof of Lemma \ref{th0}]
We prove the result with $q$ square-free. Then the Lemma follows for $q$ non-square-free immediately by considering the result for $Q=\prod_{p|q} p$. Now using relations \eqref{eq0} and \eqref{eq3}, we have that
\begin{align}
\notag &\ \sum_{n=0}^{q-1}\left( \sum_{m=1}^{h}k_q(n+m+h_1)\cdots k_q(n+m+h_s)- h\prod_{p|q} \bigg(1-\frac{\nu_p(D)}{p}\bigg) \right)^{k}  \\& \ll  \notag  qP^{ks}\sum_{r|q} \frac{1}{r}\sum_{\substack{[r_{1,1}, r_{1,2}, \cdots,\hspace{1 mm} r_{k,s}]=r}} \frac{r_{1,1}\cdots r_{k,s}}{\phi(r_{1,1}) \cdots \phi(r_{k,s})}h^{k/2} \\ \leq \notag & qP^{ks}\sum_{r|q} \frac{1}{r}\bigg(\sum_{\substack{r^{\prime} | r}} \frac{r^{\prime}}{\phi(r^{\prime})}\bigg)^{ks}h^{k/2} =qh^{k/2} P^{ks} \prod_{p|q} \left(1+\frac{1}{p}\Big(2+\frac{1}{p-1}\Big)^{ks} \right)  \\& \notag \ll qh^{k/2} P^{-2^{ks}+ks}.
\end{align}
\end{proof}

\noindent \\

\section{A probabilistic estimate}
In this section we prove an estimate about the distribution of $s$-tuples of reduced residues using a probabilistic method. The estimate derived here is valid only when $q$ is not divisible by any small prime, and in this case it is the best possible we can have. In particular, it's much better than our earlier exponential sum estimate in this range. \\

\noindent Let $X_i$, for $1\leq i \leq h$, be independent identically distributed random variables such that $$\rm{Prob}(X_i=1)=1-\rm{Prob}(X_i=0)=P.$$ Then $$X=X_1+ \cdots + X_h$$ is called a \emph{binomial random variable}. Given such a random variable $X$, we denote with $\mu_k(h, P)$ its $k$-th moment about its mean, that is to say,  $$\mu_k(h, P):=\mathbb{E}\big((X-hP)^{k} \big).$$ 

\begin{lemma}
\label{lemma4}
Let $A$ be a set of $h$ integers and $h_1< \cdots < h_s$. Suppose that for each prime divisor $p$ of $q$ we have $p> \max A- \min A+ h_s-h_1$. Suppose also that $p > y$ for all $p| q$. Then for $y> h^k$ and for each fixed even $k > 1$
\begin{align*}
M^{\mathcal{D}}_k(q, h)=&\sum_{n=0}^{q-1} \Bigg( \sum_{\substack{m \in A \\ (n+m+h_i, q)=1 \\ 1\leq i \leq s}} 1 -h \bigg(\frac{\phi(q)}{q}\bigg)^{s} \Bigg)^k
 \ll  q\bigg(h\bigg(\frac{\phi(q)}{q}\bigg)^s\bigg)^{[k/2]}+qh\bigg(\frac{\phi(q)}{q}\bigg)^s,
\end{align*}
which the implicit constant depends on $k$ and $|h_1-h_s|$.
\end{lemma}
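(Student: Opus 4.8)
First I would reduce to the case $q$ square-free, exactly as in the proof of Lemma~\ref{th0}. Write $f(n)=\sum_{m\in A}\xi_m(n)$ where $\xi_m(n)=\prod_{p\mid q}b_p(n,m)$ and $b_p(n,m)$ is the indicator that $p\nmid n+m+h_i$ for all $i$; then $f(n)$ is exactly the inner count in $M^{\mathcal D}_k(q,h)$. The starting point is the identity
$$
\mathbb E_{n\bmod q}\Big[\prod_{j\in J}\xi_{m_j}\Big]=\prod_{p\mid q}\Big(1-\frac{\nu_J}{p}\Big),
\qquad
\nu_J:=\#\{\,m_j+h_i:\ j\in J,\ 1\le i\le s\,\},
$$
valid for any $J$ and any $(m_j)_{j\in J}\in A^J$: since $p>\max A-\min A+h_s-h_1$, two of the numbers $m_j+h_i$ are congruent mod $p$ only when they are equal, so $\nu_p=\nu_J$ for every $p\mid q$, and one multiplies over $p$ by the Chinese Remainder Theorem. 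Because every $p\mid q$ exceeds $y>h^k$, each factor is positive and $\prod_{p\mid q}(1-\nu_J/p)=(\phi(q)/q)^{\nu_J}(1+O_{k,s}(1/y))$; in particular $\mu:=\mathbb E_n f=h\,\phi_{\mathcal D}(q)/q\asymp_{\mathcal D}hP^s$. I would also assume $h\ge h_0(\mathcal D,k)$, the complementary range being immediate from $f\le h$, $\sum_n f(n)=q\mu$, and $f(n)^k\le h^{k-1}f(n)$.

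Along the way I would record the bound $\mathbb E_n[f^r]\ll_{r,\mathcal D}\mu^r+\mu$ for every $r\ge1$. Expanding $f^r=\sum_{\vec m\in A^r}\prod_j\xi_{m_j}$ and using the identity gives $\mathbb E_n[f^r]\asymp_{r,s}\sum_{\vec m\in A^r}P^{\nu_{\vec m}}$. Call two coordinates of $\vec m$ adjacent if their values differ by an element of $\mathcal D-\mathcal D$, and group the tuples by the induced partition of $\{1,\dots,r\}$ into connected components (``clusters''). For fixed components one has $\nu_{\vec m}=\sum_a\nu^{(a)}$ with each $\nu^{(a)}\ge s$, and a connected set of at most $r$ integers has diameter $\le(r-1)(h_s-h_1)$, so there are only $\ll_{r,\mathcal D}h$ possible clusters of any bounded size; hence the contribution of a given component pattern with $t$ parts is $\ll_{r,\mathcal D}(hP^s)^t\asymp_{\mathcal D}\mu^t$, and summing over the finitely many patterns gives $\ll_{r,\mathcal D}\sum_{j=1}^r\mu^j\ll\mu^r+\mu$.

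For the central moments $W_j:=\sum_{n\bmod q}(f(n)-\mu)^j$ I would use the cumulant expansion (equivalently, a careful inclusion–exclusion)
$$
\mathbb E_n[(f-\mu)^j]=\sum_{\pi}\ \sum_{\vec m\in A^j}\ \prod_{B\in\pi}\kappa_B(\vec m),
\qquad
\kappa_B(\vec m):=\kappa\big(\{\xi_{m_b}:b\in B\}\big),
$$
the outer sum over set partitions $\pi$ of $\{1,\dots,j\}$ with no singleton block (singletons vanish since the $\xi_m$ are centered), so $|\pi|\le[j/2]$. Each $\kappa_B$ is a fixed polynomial in the quantities $e(L)=\prod_{p\mid q}(1-\nu_L/p)$, $L\subseteq B$, $L\ne\emptyset$; since $e(L)\asymp P^{\nu_L}\le1$ and $\nu$ is subadditive this gives the crude bound $|\kappa_B|\ll_{|B|,s}P^{\nu_B}$. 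The key point is a better bound when the values $\{m_b:b\in B\}$ split into two nonempty groups $B=B_1\sqcup B_2$ whose $\mathcal D$-translates are disjoint: then $e(L)=e(L\cap B_1)\,e(L\cap B_2)\,(1+O_{k,s}(1/y))$, so the two families of $\xi$'s are independent up to a factor $1+O(1/y)$, and since a joint cumulant vanishes identically when its family splits into two independent subfamilies, Taylor-expanding the cumulant polynomial about the ``independent'' point and using subadditivity of $\nu$ on the three-part partition $B=(B\setminus L)\sqcup(L\cap B_1)\sqcup(L\cap B_2)$ gives $|\kappa_B|\ll_{|B|,s}P^{\nu_B}/y$. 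Granting this, fix $\pi$; the sum over $\vec m$ factors over the blocks, and for a single block
$$
\sum_{(m_b)_{b\in B}\in A^B}|\kappa_B|
\ \ll_{|B|,\mathcal D}\ hP^s\ +\ \frac1y\sum_{(m_b)_{b\in B}\in A^B}P^{\nu_B}
\ \ll_{|B|,\mathcal D}\ \mu+\frac{\mu^{|B|}+\mu}{y}\ \ll_{|B|,\mathcal D}\ \mu,
$$
the first term bounding the $\ll_{|B|,\mathcal D}h$ ``connected'' tuples (each with $\nu_B\ge s$) by the crude bound, the second bounding the rest by the refined bound and $\mathbb E_n[f^{|B|}]\ll\mu^{|B|}+\mu$, and the last step using $\mu\le h<y^{1/k}$. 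Hence $W_j\ll_{j,\mathcal D}q\sum_\pi\mu^{|\pi|}\ll_{j,\mathcal D}q(\mu^{[j/2]}+\mu)$. Finally, with $\delta:=hP^s-\mu=\mu\cdot O_s(1/y)$ and $(f-hP^s)^k=\sum_l\binom kl(-\delta)^{k-l}(f-\mu)^l$, the terms with $l<k$ contribute $\ll\sum_{l<k}(\mu/y)^{k-l}q(\mu^{[l/2]}+\mu)\ll_{k,\mathcal D}q(\mu^{[k/2]}+\mu)$ (again since $\mu\le h<y^{1/k}$), so $M^{\mathcal D}_k(q,h)=W_k+O_{k,\mathcal D}(q(\mu^{[k/2]}+\mu))\ll_{k,\mathcal D}q(\mu^{[k/2]}+\mu)$, which is the asserted bound since $\mu\asymp_{\mathcal D}hP^s$.

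The step I expect to be genuinely delicate is the refined cumulant bound $|\kappa_B|\ll P^{\nu_B}/y$ in the split case: one must extract the full cancellation that near-independence provides while \emph{simultaneously} keeping the $P$-power pinned at $\nu_B$, since the ensuing sum over $(m_b)_{b\in B}$ runs over as many as $h^{|B|}$ tuples, and it is only the combination ``$P^{\nu_B}/y$ summed against $h^{|B|}$'', controlled precisely by $y>h^k$, that is small enough. Everything else is bookkeeping with implied constants that remain uniform because $|B|\le k$ and $\nu_L\le ks$ throughout.
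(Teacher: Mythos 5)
Your approach is genuinely different from the paper's, and while the strategy is plausible, it rests on a key step that you flag yourself but do not actually prove.

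The paper follows Montgomery--Vaughan's Lemma 9 closely, with one adaptation to the $s$-tuple setting: split $A$ into classes $A_j = \{m \in A : m \equiv j \pmod{H}\}$ with $H = h_s - h_1 + 1$, then apply H\"older over $j$. Within a fixed $A_j$ the shifts $m + h_i$ are \emph{all distinct} (since distinct $m, m' \in A_j$ differ by at least $H > |h_i - h_{i'}|$), so for any $r$-tuple from $A_j$ with $t$ distinct values the count of $n \pmod q$ is exactly $\prod_{p\mid q}(p - st)$. Expanding $S_j$ with Stirling numbers and noting $\prod_p(p-st) = qP^{st}(1 + O_{st}(1/y))$, the expression matches $q\,\mu_k(|A_j|, P^s)$ up to an admissible error, and MV's Lemma 11 ($\mu_k(h,P) \ll (hP)^{[k/2]} + hP$) finishes. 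The whole point of the reduction mod $H$ is to make the count over $n$ an \emph{exact} product, so that the correspondence with i.i.d.\ Bernoulli variables is literal; no independence approximation or cumulant machinery is needed.

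Your route bypasses the reduction mod $H$ entirely and works with arbitrary tuples from $A$, organizing the computation by the cluster decomposition and a multivariate cumulant expansion. This is a legitimate alternative in spirit, and your crude moment bound $\mathbb{E}_n[f^r] \ll \mu^r + \mu$ via clusters is sound. However, the whole argument hinges on the refined cumulant bound $|\kappa_B| \ll_{|B|,s} P^{\nu_B}/y$ when the values split into two $\mathcal{D}$-separated groups. You sketch why this should hold (near-independence makes the leading terms cancel; subadditivity of $\nu$ over partitions keeps the $P$-exponent pinned at $\geq \nu_B$), but you do not carry out the Taylor expansion of the cumulant polynomial, track the derivatives against the $\tilde e(L)$ weights, or verify uniformity of the constants --- and you candidly identify this as the ``genuinely delicate'' step. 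As written, this is a gap: without that estimate, nothing controls the sum of $|\kappa_B|$ over the $\asymp h^{|B|}$ split tuples, which is precisely where the factor $1/y$ against $h^k < y$ must do its work. You would need to supply the missing argument (and also be careful that the error factors $1 + O_{st}(1/y)$ in $e(L)$ are uniform over all $L \subseteq B$ with $\nu_L \leq ks$, which is why $k$ fixed matters). The paper's reduction to $A_j$ avoids this entire difficulty, which is why it is the more economical proof here.
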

\noindent
\begin{remark}
Under the conditions of Lemma \ref{lemma4}, it provides a better estimate than Lemma \ref{th0}. Indeed Lemma \ref{th0} yields the estimate $$M^{\mathcal{D}}_k(q, h)  \ll  qh^{k/2} P^{-2^{ks}+ks}$$ whereas by Lemma \ref{lemma4} we have that  $$M^{\mathcal{D}}_k(q, h) \ll  q\bigg(hP^s\bigg)^{[k/2]}+qhP^s.$$ Comparing two bounds and using the fact that $q\Big(hP^s\Big)^{[k/2]} \leq qh^{k/2} P^{-2^{ks}+ks}$ proves the point.
\end{remark}
\begin{proof}
The proof is similar to the proof of Lemma 9 in \cite{mv}, with a small variation which we explain. We have that
$$\sum_{\substack{m \in A \\ (m+h_i, q)=1 \\ 1\leq i \leq s}} 1 -h P^{s} = \sum_{1 \leq j \leq H} \Bigg(\sum_{\substack{m \in A_j \\ (m+h_i, q)=1 \\ 1\leq i \leq s}} 1 -|A_j| P^{s}\Bigg)$$

\noindent where
\begin{align*}
A_j=\left\lbrace m \in A : m \equiv j \;\mbox{(mod $H$)} \right\rbrace ,
\end{align*}
where $H= |h_s-h_1| +1$. From H$\ddot{\rm o}$lder's inequality with, $\frac{1}{k}+ \frac{1}{\frac{k}{k-1}}=1$, we have that
$$\bigg| \sum_{i=1}^{H} a_i \bigg| \leq H^{\frac{k-1}{k}} \bigg( \sum_{i=1}^{H} |a_i|^{k} \bigg)^{\frac{1}{k}} $$ and, consequently,
$$\Bigg( \sum_{\substack{m \in A \\ (m+h_i, q)=1 \\ 1\leq i \leq s}} 1 -h P^{s}\Bigg)^k  \leq H^{k-1} \sum_{1 \leq j \leq H} \Bigg( \sum_{\substack{m \in A_j \\ (m+h_i, q)=1 \\ 1\leq i \leq s}} 1 -|A_j| P^{s} \Bigg)^k. $$
Now we focus on
\begin{equation}
\label{eq2}
S_j=\sum_{n=0}^{q-1} \Bigg( \sum_{\substack{m \in A_j \\ (n+m+h_i, q)=1 \\ 1\leq i \leq s}} 1 -|A_j| P^{s} \Bigg)^k.
\end{equation}
We note that
$$S_j=\sum_{n} \sum_{r} {k\choose r}\bigg(\sum_{\substack{m \in A_j \\ (n+m+h_i, q)=1 \\ 1\leq i \leq s}} 1\bigg)^r \bigg(-|A_j|P^s \bigg)^{k-r}.$$
Moreover, we have that
$$
 \bigg(\sum_{\substack{m \in A_j \\ (n+m+h_i, q)=1 \\ 1 \leq i \leq s}} 1 \bigg)^r= \sum_{
\substack{m_1 , \cdots, m_r \in A_j \\ (n+m_l+h_i, q)=1 \\ 1 \leq i \leq s \\ 1 \leq l \leq r}} 1$$
We will show that $m_l+h_i \neq m_{l^{\prime}} + h_{i^{\prime}}$ for $m_l \neq m_{l^{\prime}}$. Without loss of generality, we assume that $m_l < m_{l^{\prime}}$ and therefore  $m_l+h_i < m_{l^{\prime}}+ h_{i^{\prime}}.$ This is true since $m_l - m_{l^{\prime}} \equiv 0 \;\mbox {{\rm (mod} $ H $}) $ and thus $|m_l-m_{l^{\prime}}|\geq H > |h_i - h_{i^{\prime}}| $. Now we claim that $m_l +h_i  \not \equiv  m_{l^{\prime}}+h_{i^{\prime}} \;\mbox {{\rm( mod} $ p $})$ for all $p|q$. Assume, on the contrary, that $$m_l +h_i  \equiv  m_{l^{\prime}}+h_{i^{\prime}} \;\mbox {({\rm mod} $ p $)}$$ for some $p|q$. Then  we have that $p| m_l+h_i - \big(m_{l^{\prime}} + h_{i^{\prime}}\big)$. We already have shown $m_l+h_i - \big(m_{l^{\prime}} + h_{i^{\prime}}\big) \neq 0$, therefore $$p \leq |m_l-m_{l^{\prime}}| + |h_i-h_{i^{\prime}}|,$$ which contradicts our assumption that $p> \max A- \min A+ h_s-h_1$.\\
\noindent Applying these facts and changing the order of summation in $S_j$, we have that
\begin{align}
\sum_{{
\substack{n=0 \\ (n+m_j+h_i, q)=1 \\ 1 \leq i \leq s \\ 1\leq j \leq r}} }^{q-1} 1 = \prod_{p|q} (p-st),
\end{align}where $t= \# \left\lbrace m_1, \cdots, m_r \right\rbrace$. Let  $S(r, t)$ denote the Stirling number of the second kind, i.e. the number of ways of partitioning a set of cardinality $r$ into exactly $t$ non-empty subsets. Following the proof of Lemma 9 in \cite{mv}, $S(r, t)t!$ is the number of surjective maps from a set of cardinality $r$ to a set of cardinality $t$. We set $S(r,0) = 0$ so that we have
$$\sum_{n=0}^{q-1}\sum_{
\substack{m_1 , \cdots, m_r \in A_j \\ (n+m_k+h_i, q)=1 \\ 1 \leq i \leq s \\ 1 \leq k \leq r}} 1= \sum_{t=0}^{r}\sum_{\substack{\mathcal{B} \subseteq A_j \\ \text{card}(\mathcal{B})=t }} S(r, t)t!\prod_{p|q}(p-st) $$  As there are ${|A_j| \choose t}$ possible choices for $\mathcal{B}$, the above is $$q\sum_{t=1}^{r}{|Aj| \choose t} S(r, t)t!P^{st}\prod_{p|q}\Big(1-\frac{st}{p}\Big)\bigg(1-\frac{1}{p}\bigg)^{-st}$$ and, since $p>y> h_s-h_1$ we have that
$$\prod_{p|q}\bigg(1-\frac{st}{p}\bigg)\bigg(1-\frac{1}{p}\bigg)^{-st}=1+ O_{st}\Big(\frac{1}{y}\Big)$$
From Lemma 9 in \cite[page.326]{mv}  we have that $$S_j=q\sum_{r=0}^{k}{k \choose r} (-|A_j|P^{s})^{k-r} \sum_{t=0}^{r} {|A_j| \choose t} S(r, t)t!(P)^{st}\Big(1+O_{st}(\frac{1}{y})\Big)$$ and $$q\sum_{r=0}^{k}{k \choose r} (-|A_j|P^{s})^{k-r} \sum_{t=0}^{r} {|A_j| \choose t} S(r, t)t!(P)^{st}= \mu_k(|A_j|, P^s)$$ using \cite[page.327]{mv}. Thus
\begin{align*}
S_j= & q\sum_{r=0}^{k}{k \choose r} \bigg(-|A_j|P^s\bigg)^{k-r} \sum_{t=0}^{r} {|A_j| \choose t} S(r, t)t!P^{st}\bigg(1+ O_{st}\Big(\frac{1}{y}\Big)\bigg) \\ & =q\mu_k\bigg(|A_j|,P^s \bigg)+ O\bigg\{\frac{q}{y}\Big(hP^s\Big)^k +hP^s\bigg\},
\end{align*}
 using the fact that $|A_j| \leq h$. For the error term the dependence of the implicit constant on $t$ can be considered to be a dependence on $k$, since $t < s < k$ we also use $$\frac{q}{y}\sum_{t=0}^{r} {|A_j| \choose t} P^{st} \ll \frac{q}{y}\Bigg((h^s)^r +hP^s\Bigg)$$\\
\noindent Next note that Lemma 11 of \cite{mv} states that, for any fixed integer $k > 0$, $\mu_{k}(h, P) \ll (hP)^{[k/2]} + hP,$ uniformly for $0 < P < 1$, $h = 1, 2, 3, ...$. So $$\mu_{k}(|A_j|, P^s) \ll (|A_j|P^s)^{[k/2]} + |A_j|P^s\leq (hP^s)^{[k/2]} + hP^s.$$ Using this and and our assumption that $y>h^k$, we find that
\begin{align}
\label{eq4}
 &\sum_{n=0}^{q-1} \Bigg( \sum_{\substack{m \in A \\ (n+m+h_i, q)=1 \\ 1\leq i \leq s}} 1 -h P^{s} \Bigg)^k
 \ll  q\Big(hP^s\Big)^{[k/2]}+qhP^s,
\end{align}
which concludes the proof of the Lemma.
\end{proof}
\noindent
\\
\\
\section{Proof of Theorem 0.1}
In this section we will prove a estimate about the distribution of $s$-tuples of reduced residues, by combining  both our probabilistic and exponential sum estimates. The new estimate that we derive here is valid for every $q$ and it is better than our exponential sum estimate. Using the this estimate, we will be able to prove Theorem \ref{Theorem}.\\

\begin{lemma}
\label{P.e} Let $k$ be a given even number, and fix constant $A>k$. Let $q_1= \prod_{\substack{p|q \\p\leq y }} p$ and $q_2= \prod_{\substack{p|q \\ p>y}} p$, where $h^{A}>y>h^k$. Correspondingly we set $P_i= \frac{\phi(q_i)}{q_i}$ for $i=1, 2$.  \it{For $h> P^{-1}$ we have}
\begin{align*}
M^{\mathcal{D}}_k(q, h) \ll q(hP^s)^{[k/2]}+qh({P)^s} + qh^{k/2}P_1^{-2^{ks}+ks}P_2^{sk}.
\end{align*}
And the implicit constant depends on $k$ and $s$.
\end{lemma}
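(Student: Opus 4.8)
The plan is to split $q=q_1q_2$ as in the statement (we may assume $q$ square-free, replacing it by its radical exactly as in Lemmas \ref{th0} and \ref{lemma4}), to use the Chinese Remainder Theorem to identify $n\bmod q$ with a pair $(n_1,n_2)$, $n_i\bmod q_i$, and to exploit $k_q(a)=k_{q_1}(a)k_{q_2}(a)$ together with $\prod_{p\mid q}(1-\nu_p(\mathcal{D})/p)=C_1C_2$, where $C_i:=\prod_{p\mid q_i}(1-\nu_p(\mathcal{D})/p)$. Write $u(n_1,m):=\prod_i k_{q_1}(n_1+m+h_i)$ and $v(n_2,m):=\prod_i k_{q_2}(n_2+m+h_i)$. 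The key algebraic step is
$$\sum_{m=1}^h u(n_1,m)v(n_2,m)-hC_1C_2 \;=\; C_2\Bigl(\sum_{m=1}^h u(n_1,m)-hC_1\Bigr)\;+\;\sum_{m=1}^h u(n_1,m)\bigl(v(n_2,m)-C_2\bigr)\;=:\;C_2\,\Delta_1(n_1)+W(n_1,n_2),$$
so that, $k$ being even, $M^{\mathcal{D}}_k(q,h)\le 2^{k-1}\bigl(C_2^k\sum_{n_1,n_2}\Delta_1(n_1)^k+\sum_{n_1,n_2}W(n_1,n_2)^k\bigr)$. Pulling out $C_2$ rather than $C_1$ is essential: $\Delta_1$ still involves the \emph{interval} $\{1,\dots,h\}$ and the small-prime modulus $q_1$, which is exactly the setting of the exponential-sum bound (Lemma \ref{th0}), whereas $W(n_1,n_2)=\sum_{m\in B(n_1)}\prod_i k_{q_2}(n_2+m+h_i)-|B(n_1)|C_2$ involves the \emph{arbitrary} set $B(n_1):=\{m\le h:u(n_1,m)=1\}$ and the large-prime modulus $q_2$, which is precisely what the probabilistic bound (Lemma \ref{lemma4}) tolerates.

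For the first term, $\sum_{n_1,n_2}\Delta_1(n_1)^k=q_2\,M^{\mathcal{D}}_k(q_1,h)\ll q_2\,q_1h^{k/2}P_1^{-2^{ks}+ks}$ by Lemma \ref{th0}, and $C_2^k\asymp_{\mathcal{D}}P_2^{sk}$ by the Remark following Lemma 1.1 applied to $q_2$; this contributes $\ll qh^{k/2}P_1^{-2^{ks}+ks}P_2^{sk}$, the third term of the claim. For the second term I would fix $n_1$ and apply Lemma \ref{lemma4} with modulus $q_2$ and integer set $A=B(n_1)\subseteq\{1,\dots,h\}$: every $p\mid q_2$ satisfies $p>y>h^k\ge \max B(n_1)-\min B(n_1)+h_s-h_1$ (for $h$ large; bounded $h$ is trivial) and $y>h^k\ge|B(n_1)|^k$, so Lemma \ref{lemma4} gives $\sum_{n_2}W(n_1,n_2)^k\ll q_2\bigl((|B(n_1)|P_2^s)^{k/2}+|B(n_1)|P_2^s\bigr)$, the discrepancy between the two centerings $|B(n_1)|C_2$ and $|B(n_1)|P_2^s$ being $O(1/y)$ relatively and so contributing only $\ll qP_2^{sk}$ overall. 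Summing over $n_1$ reduces the matter to the two sums $\sum_{n_1}|B(n_1)|$ and $\sum_{n_1}|B(n_1)|^{k/2}$. Orthogonality over $n_1\bmod q_1$ gives $\sum_{n_1}|B(n_1)|=hq_1C_1$ exactly, producing $\ll qhC_1P_2^s\asymp qhP^s$, the second term of the claim.

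The crux is $\sum_{n_1}|B(n_1)|^{k/2}$. Writing $|B(n_1)|=hC_1+\Delta_1(n_1)$, expanding by the binomial theorem, using $\sum_{n_1}\Delta_1(n_1)=0$ and $\sum_{n_1}\Delta_1(n_1)^l=M^{\mathcal{D}}_l(q_1,h)$ for $2\le l\le k/2$, and invoking Lemma \ref{th0} once more, one gets
$$\sum_{n_1}|B(n_1)|^{k/2}\;\ll_{\mathcal{D},k}\; q_1(hP_1^s)^{k/2}\;+\;q_1\sum_{l=2}^{k/2}h^{\,k/2-l/2}\,P_1^{\,sk/2-2^{ls}}.$$
The first piece yields $q(hP^s)^{k/2}$, the first term of the claim; the remaining sum over $l$ must be absorbed into it, and this is where the size restriction on $q_1$ enters: each term requires $P_1^{-2^{ls}}\ll h^{l/2}$, which holds because $q_1$ is built from primes below $y<h^A$, so Mertens gives $P_1^{-1}\ll\log y\ll_k\log h$, whence $P_1^{-2^{ls}}\ll(\log h)^{2^{ls}}=o(h^{l/2})$ for $h$ large (bounded $h$ being trivial), and the hypothesis $h>P^{-1}$ is used here in keeping the various scales aligned. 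Assembling the three contributions gives the asserted bound. I expect the main difficulty to be the fibered, repeated application of Lemma \ref{lemma4} to the \emph{non-interval} sets $B(n_1)$ and the bookkeeping that weighs the polylogarithmic losses $P_1^{-O(1)}$ against the polynomial-in-$h$ savings; the remaining estimates are routine.
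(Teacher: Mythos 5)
Your proposal follows the paper's proof almost verbatim: the same CRT split $q=q_1q_2$, the same decomposition of $D$ into $D_1=C_2\,\Delta_1(n_1)$ (handled by the exponential-sum estimate, Lemma \ref{th0}, applied to $q_1$) and $D_2=W(n_1,n_2)$ (handled, for fixed $n_1$, by the probabilistic estimate, Lemma \ref{lemma4}, applied to $q_2$ with $A=B(n_1)$), the same verification that the primes of $q_2$ exceed $\max B(n_1)-\min B(n_1)+h_s-h_1$ and that $y>|B(n_1)|^k$, and the same $O(1/y)$ accounting for replacing $P_2^s$ by $C_2$. The one place you diverge is in controlling the size of $B(n_1)$ before summing over $n_1$: the paper asserts the pointwise sieve bound $|A_{n_1}|\ll hP_1^s$ and feeds it directly into the conclusion of Lemma \ref{lemma4}, whereas you instead sum that conclusion over $n_1$ and bound $\sum_{n_1}|B(n_1)|^{k/2}$ by expanding $(hC_1+\Delta_1(n_1))^{k/2}$ binomially, using $\sum_{n_1}\Delta_1(n_1)=0$ and the moment bounds $M^{\mathcal D}_l(q_1,h)\ll q_1h^{l/2}P_1^{-2^{ls}+ls}$ for $2\le l\le k/2$ from Lemma \ref{th0} again, absorbing the lower-order terms by Mertens since $P_1^{-1}\ll\log y\ll_{k}\log h$. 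Both routes are correct. It is worth noting that the paper's pointwise sieve bound really does hold in the stated range: since $y<h^A$ with $A$ a fixed constant, one can sieve $[1,h]$ only by the primes $p\mid q_1$ with $p\le h^{1/(3s)}$ and discard the rest, at a cost of $\prod_{p\mid q_1,\,h^{1/(3s)}<p\le y}\bigl(1-\nu_p(\mathcal D)/p\bigr)^{-1}\ll_{s,A}1$ by Mertens, giving $|A_{n_1}|\ll_{s,A}hP_1^{s}$ uniformly in $n_1$. So your averaging argument, while a bit longer, is a valid and self-contained alternative that sidesteps the sieve; it makes explicit that only the $L^{k/2}$-mean of $|B(n_1)|$ is required rather than a uniform upper bound.
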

\begin{proof}
Since $q$ is square-free we have $q = q_1 q_2$ and $(q_1, q_2) = 1$. By the Chinese Remainder Theorem we have that
$$M^{\mathcal{D}}_k(q, h)= \sum_{n_1=0}^{q_1-1}\sum_{n_2=0}^{q_2-1} D(n_1, n_2)^k, $$
where
$$D(n_1, n_2) =  \sum_{\substack{m =1 \\ (n_i+m+h_j, q_i)=1 \\ 1\leq j \leq s \\ i=1,2}}^{h} 1 -h \prod_{p|q} \bigg(1-\frac{\nu_p(\mathcal{D})}{p}\bigg).
$$
Following \cite{mv}, we may write $D=D_1+D_2$ where
$$D_1=\prod_{p|q_2} \bigg(1-\frac{\nu_p(\mathcal{D})}{p}\bigg)\sum_{\substack{m =1 \\ (n_1+m+h_j, q_1)=1 \\ 1\leq j \leq s }}^{h} 1 -h \prod_{p|q} \bigg(1-\frac{\nu_p(\mathcal{D})}{p}\bigg)$$
$$D_2=\sum_{\substack{m =1 \\ (n_i+m+h_j, q_i)=1 \\ 1\leq j \leq s \\ i=1,2}}^{h} 1-\prod_{p|q_2} \bigg(1-\frac{\nu_p(\mathcal{D})}{p}\bigg)\sum_{\substack{m =1 \\ (n_1+m+h_j, q_1)=1 \\ 1\leq j \leq s }}^{h} 1$$
From Holder's inequality we have $D^k \leq 2^k\big(D_1^{k}+D_2^{k}\big),$ and consequently
$$M^{\mathcal{D}}_k(q, h) \ll \sum_{n_1} \sum_{n_2} D_1 ^k + \sum_{n_1} \sum_{n_2}D_2^k.$$
Since $D_1$ is independent of $n_2$, we have that
\begin{equation}
\label{eq4.5}
\sum_{n_1} \sum_{n_2} D_1 ^k \ll q_2P^{sk}_2M^{\mathcal{D}}_k(q_1, h),
\end{equation}
which by Lemma \ref{th0} leads to
$$\sum_{n_1} \sum_{n_2} D_1 ^k \ll_k q_2P^{sk}_2q_1h^{k/2} P_1^{-2^{ks}+ks}=qh^{k/2}P_1^{-2^{ks}+ks}P_2^{sk}.$$
To estimate $\sum_{n_1} \sum_{n_2} D_2^k$ let

 $$A_{n_{1}}= \left\lbrace 1\leq m\leq h : (n_1+m+h_j, q_1)=1 , 1 \leq j \leq s \right\rbrace. $$
Note that the size of $A_{n_1}$ is $$\sum_{\substack{m =1 \\ (n_1+m+h_j, q_1)=1 \\ 1\leq j \leq s }}^{h} 1,$$
which, by a simple sieve argument, is $\ll hP^{s}_{1}$. Therefore
$$\sum_{n_2} D_2^k= \sum_{n_2}\bigg( \sum_{\substack{m \in A_{n_1} \\ (n_2+m+h_j, q_2)=1 \\ 1\leq j \leq s \\}} 1-\prod_{p|q_2} \bigg(1-\frac{\nu_p(\mathcal{D})}{p}\bigg)|A_{n_1}| \bigg)^k.$$ Now, since $y> h^k$ and $p|q_2$, we have that $p>h^k$ and consequently $$\prod_{p|q_2}\bigg(1-\frac{\nu_p(\mathcal{D})}{p}\bigg)= \prod_{p|q_2}\Big(1-\frac{s}{p}\Big)= \prod_{p|q_2}\Big(1-\frac{1}{p}\Big)^s \Big(1+ O\Big(\frac{1}{y}\Big)\Big).$$
Next we need to use Lemma \ref{lemma4} with $A=A_{n_1}$ and $q=q_2$. In order to do this we need to verify that $p> \max A_{n_1} - \min A_{n_1} + h_s- h_1$ for all $p|q_2$. We have that $\max A_{n_1} - \min A_{n_1} \leq h$ and, since for $p| q_2 $ we have $p>y > h^k$, it suffices to verify that  $h+H< h^k$. This is true because $H$ is fixed, $k\geq2$ and $h>P^{-1}$. (Note that we may assume $P^{-1} >H$ else, otherwise, $P^{-1}$ is bounded and we can deduce the result desired here   from Lemma \ref{th0}.) Using Lemma \ref{lemma4}, we have that
 $$\sum_{n_2} D_2^k \ll q_2\bigg(|A_{n_{1}}|\bigg(\frac{\phi(q_2)}{q_2}\bigg)^s\bigg)^{[k/2]}+q_2|A_{n_{1}}|\bigg(\frac{\phi(q_2)}{q_2}\bigg)^s.$$ Since $|A_{n_{1}}| \ll hP^{s}_{1}$, we have that
 $$\sum_{n_2} D_2^k \ll q_2\bigg(hP^s\bigg)^{[k/2]}+q_2hP^s, $$
consequently we have that \big(with $P=\frac{\phi(q)}{q}$\big)
$$\sum_{n_1} \sum_{n_2} D_2^k \ll q(hP^s)^{[k/2]}+qhP^s. $$
Finally, we arrive at our desired estimate
\begin{align}
\label{P.E}
M^{\mathcal{D}}_k(q, h) \ll q(hP^s)^{[k/2]}+qh({P)^s} + qh^{k/2}P_1^{-2^{ks}+ks}P_2^{sk}.
\end{align}
\end{proof}
Now, Using above estimate we will prove Theorem \ref{Theorem}. Let $a_1< a_2 < \cdots$ be the integers, such that $a_i +h_j$ is co-prime to $q$ for each $h_j \in \mathcal{D}$. Let
$$L(x)=\# \left\lbrace i : 1 \leq i \leq \phi_{_{\mathcal{D}}}(q), a_{i+1}-a_{i}> x  \right\rbrace.$$
We have that $$V^{\mathcal{D}}_{\lambda }(q)= \lambda \int_{0}^{\infty} L(x)x^{\lambda -1}dx.$$
Obviously, $L(x) \leq \prod_{p|q} (p-\nu_p(\mathcal{D})) < CqP^s$ for some constant $C=C(\mathcal{D})$. Therefore for $x<P_{_{\mathcal{D}}}^{-1}$ \Big(with $P_{_{\mathcal{D}}}=\prod_{p|q} \Big(1-\frac{\nu_p(\mathcal{D})}{p}\Big)$\Big), since $P_{_{\mathcal{D}}}\asymp P^{s}$, we have that
$$\lambda \int_{0}^{P_{_{\mathcal{D}}}^{-1}} L(x)x^{\lambda -1} \ll qP^s \int_{0}^{P_{_{\mathcal{D}}}^{-1}} x^{\lambda -1} \ll q(P^{s})^{1-\lambda }. $$
To bound $L(x)$ for larger $x$, we note that if $a_{i+1} - a_i > h$, for some integer $h$. Then
$$\sum_{\substack{m =1 \\ (n+m+h_j, q)=1 \\ 1\leq j \leq s }}^{h} 1 -hP_{_{\mathcal{D}}}= -hP_{_{\mathcal{D}}} $$ for $a_i \leq n < a_{i+1} - h$. Let $k$ be a fixed even integer bigger than $2\lambda $. Then
\begin{align}
\label{eq5}
\sum_{\substack{i=1 \\ a_{i+1}-a_i >h}}^{qP_{_{\mathcal{D}}}} (a_{i+1}-a_i- h)(hP_{_{\mathcal{D}}})^{k} \leq M^{\mathcal{D}}_k(q, h).
\end{align}
If $h=[\frac{x}{2}]$ and $a_{i+1}-a_i > x$, then $a_{i+1}-a_i-h > h$, so the left-hand side of \eqref{eq5} is $\geq h(hP_{_{\mathcal{D}}})^{k}L(x)$. Combining this with our estimate in Lemma \ref{P.e} yields
$$x(xP_{_{\mathcal{D}}})^k L(x) \ll q\big((xP_{_{\mathcal{D}}})^{k/2} + x^{k/2}P^{ks} P^{-2^{ks}}_1\big).
$$Now for $x<e^{P^{-\alpha}} $, if $y=x^{2k}+1$ where, $\alpha= \frac{ks}{2^{ks+1}}$, then we have $$P^{-1}_1=\big( \prod_{p< y}(1-\frac{1}{p})\big)^{-1} \ll \log y \ll \log x\ll P^{-\alpha}.$$
Therefore $$P_1^{-2^{ks}} \ll (P^{-\alpha})^{2^{ks}} \ll P^{-\frac{sk}{2}}.$$ So we have
$$L(x) \ll \frac{qP_{_{\mathcal{D}}}}{(xP_{_{\mathcal{D}}})^{\frac{k}{2}+1}}.$$ By integrating  both sides we deduce that $$\int_{P^{-1}_{0}}^{e^{P^{-\alpha/k}}} L(x)x^{\lambda -1}dx \ll \int_{P_{_{\mathcal{D}}}^{-1}}^{e^{P^{-\alpha/k}}} \frac{qP_{_{\mathcal{D}}}}{(xP_{_{\mathcal{D}}})^{\frac{k}{2}+1}}x^{\lambda -1}dx .$$ Since $\frac{k}{2}+1> \lambda $ we have
$$\int_{P_{_{\mathcal{D}}}^{-1}}^{P^{-\alpha/k}} L(x)x^{\lambda -1}dx \ll qP_{_{\mathcal{D}}}^{1-\lambda } \ll q(P^{s})^{1-\lambda }. $$ For larger $x$ we use Lemma \ref{th0},  which gives us that
$$M^{\mathcal{D}}_k(q, h) \ll qh^{k/2} P^{-2^{ks}+ks}.$$ Therefore we have
$$L(x) \ll \frac{qP^{-2^{ks}}}{x^{\frac{k}{2}+1}}$$ and
$$\int_{e^{P^{-\alpha/k}}}^{\infty} L(x)x^{\lambda -1}dx \ll qP^{-2^{ks}} \int_{e^{P^{-\alpha/k}}}^{\infty} \frac{x^{\lambda -1}}{x^{\frac{k}{2}+1}}dx$$ So taking $k= 2\lfloor \lambda\rfloor +1 $ implies that
$$\int_{e^{P^{-\alpha/k}}}^{\infty} L(x)x^{\lambda -1}dx  \ll q \frac{P^{-2^{ks}}}{e^{P^{-\alpha/k}}} \ll q(P^s)^{1-\lambda},$$ for $P^{-1}$ large enough, which finishes the proof.
\\

{\it E-mail address}: farzad.aryan@uleth.ca
\end{document}